\newcommand{\rr}[1]{\mathbf R^{#1}}
\newcommand{\vrum}{\vspace{0.1cm}}
\newcommand{\masfR}{\mathsf R}
\def\be{\begin{equation}}
\def\ee{\end{equation}}
\def\bena{\begin{eqnarray*}}
\def\ena{\end{eqnarray*}}
\def\mR{\mathbb{R}}   \def\mC{\mathbb{C}}
\newcommand{\aaf}{A_a^{\varphi}}
\numberwithin{equation}{section}
\newtheorem{thm}{Theorem}
\numberwithin{thm}{section}
\newtheorem{prop}[thm]{Proposition}
\newtheorem{cor}[thm]{Corollary}
\newtheorem{lemma}[thm]{Lemma}
\theoremstyle{definition}
\newtheorem{defn}[thm]{Definition}
\theoremstyle{remark}
\newtheorem{rem}[thm]{Remark}
\begin{document}

\begin{abstract}
We introduce multilinear  localization operators in terms of the short-time Fourier transform, and  multilinear
Weyl pseudodifferential operators. We prove that such localization operators are in fact
Weyl pseudodifferential operators whose symbols are given by the convolution between the symbol of the localization operator and the multilinear Wigner transform. For such interpretation we use the kenrel theorem for the Gelfand-Shilov space. Furthermore, we
study the continuity properties of the multilinear  localization operators on modulation spaces. Our results extend some known results when restricted to the linear case.
\end{abstract}

\title{Continuity properties of multilinear  localization operators on modulation spaces}

\author{Nenad Teofanov}
\address{Department of Mathematics and Informatics,
University of Novi Sad, Novi Sad, Serbia}
\email{nenad.teofanov@dmi.uns.ac.rs}
\thanks{
This research is supported by MPNTR of Serbia, projects no. 174024, and DS 028 (TIFMOFUS).}

\keywords{multilinear localization operators; pseudodifferential operators; modulation spaces; short-time Fourier transform; Wigner transform}
\subjclass{47G30, 35S05, 46F05, 44A35}

\maketitle

\par

\section{Introduction}\label{sec0}

Multilinear localization operators were first introduced in \cite{CKasso}
and their continuity properties are formulated in terms of modulation spaces. The
key point is the interpretation of these operators as multilinear Kohn-Nirenberg pseudodifferential operators.
The  multilinear pseudodifferential operators were already studied in the context of modulation spaces in \cite{BGHKasso},
see also a more recent contribution \cite{MOPf} where such approach is strengthened and applied to the bilinear and trilinear Hilbert transform.

\par

Our approach is related to  Weyl pseudodifferential operators instead,
with another (Weyl) correspondence between the operator and its symbol. Both correspondences are particular cases of the
so-called $\tau-$pseudodifferential operators, $\tau \in [0,1]$. For $ \tau = 1/2$ we obtain Weyl operators, while for $\tau = 0 $ we recapture Kohn-Nirenberg operators. We refer to \cite{CDT, CNT} for the recent contribution in that context (see also the references given there).

\par

The Weyl correspondence provides an elegant interpretation of localization operators as Weyl  pseudodifferential operators. This is given by the formula that contains the Wigner transform which is, together with the short-time Fourier transform, the main tool in our investigations. We refer to \cite{Wong1998,Gosson} for more details on the Wigner transform.

\par

In signal analysis, different localization techniques are used to describe signals which are as concentrated as possible in general regions of the phase space. This motivated I. Daubechies to address these questions by introducing certain localization operators in the  pioneering  contribution \cite{Daube88}. Afterwards, Cordero and Grochenig made an essential contribution in the context of time-frequency analysis, \cite{CorGro2003}. Among other things, their results emphasized the role played by modulation spaces in the study of localization operators.

\par

In this paper we first recall the basic facts on  modulation spaces  in Section \ref{sec01}. Then, in Section \ref{sec02},
following the definition of bilinear localization operators given in \cite{Teof2018}
we introduce multilinear localization operators, Definition \ref{locopdef}. Then we define
the multilinear Weyl pseudodifferential operators and give their weak formulation in terms of the
multilinear Wigner transform (Lemma \ref{L-and-W}). By using the kernel theorem for Gelfand-Shilov spaces,  Theorem \ref{kernelteorema}, we prove that the multilinear localization operators can be interpreted as multilinear Weyl
pseudodifferential operators in the same way as in the linear case, Theorem \ref{Weyl connection lemma}.

In Section \ref{sec03} we first recall two results from \cite{CN}: the (multilinear version of) sharp integral bounds for the Wigner transform, Theorem \ref{thm-cross-Wigner}, and continuity properties of pseudodifferential operators on modulation spaces,
Theorem \ref{bounded Weyl-extended}. These results, combined with the convolution estimates for modulation spaces from \cite{TJPT2014}, Theorem \ref{mainconvolution}, are then used to prove the main result of the continuity properties of
multilinear localization operators on modulation spaces, Theorem \ref{conv-a-cross-Wigner}.

\noindent \textbf{Notation.}
The Schwartz space of rapidly decreasing smooth functions is denoted by  $\mathcal{S} (\mathbb{R}^d)$, and its dual space of tempered
distributions is denoted by  $\mathcal{S}'(\mathbb{R}^d)$. We use the brackets
$\langle f,g\rangle$ to denote the extension of the inner product
$\langle f,g\rangle=\int f(t){\overline {g(t)}}dt$ on $L^2(\mathbb{R}^d)$
to any pair of dual spaces.
The Fourier transform is normalized to be ${\hat   {f}}(\omega)=\mathcal{F} f(\omega)
=\int f(t)e^{-2\pi i t\omega}dt$. The involution $ f^\ast $ is $ f^\ast (\cdot) = \overline{f (-\cdot)},$
and the convolution of $f$ and $g$ is given by
$ f * g (x) = \int f(x-y) g (y) dy,$ when the integral exists.

\par

We denote by $\langle \cdot \rangle ^s$  the polynomial weights
$$
 \langle (x, \omega) \rangle^s=  (1+|x|^2+|\omega|^2)^{s/2},\quad
   (x,\omega)\in \mathbb{R}^{2d}, \,\quad s\in \mathbb{R},\,
$$
and $\langle x \rangle = \langle 1 + |x|^2\rangle ^{1/2}, $ when $ x \in \mathbb{R}^d.$

We use the notation $A\lesssim B$ to indicate that $A\leq c B$ for a suitable constant $c>0$,
whereas $A \asymp B$ means that $c^{-1} A \leq B \leq c A$ for some $c\geq 1$.

\par

\noindent \textbf{The Gelfand-Shilov space and Weyl pseudodifferential operators.}
The Gelfand-Shilov type space of analytic functions   $  {\mathcal S} ^{(1)} (\mathbb{R}^d ) $ is
given by
$$ f \in  {\mathcal S}^{( 1)} (\mathbb{R}^d) \Longleftrightarrow
\| f(x)  e^{h\cdot |x|}\|_{L^\infty} < \infty \;
\; \text{and} \;
\| \hat f (\omega)  e^{h\cdot |\omega|}\|_{L^\infty}< \infty, \;\; \forall  h > 0.
$$

Any $ f \in  {\mathcal S}^{( 1)} (\mathbb{R}^d) $ can be extended to a
holomorphic function $f(x+iy)$ in the strip
$ \{ x+iy \in \mC ^d \; : \; |y| < T \} $ some $ T>0$, \cite{GS, NR}.
The dual space of  $  {\mathcal S} ^{(1)} (\mathbb{R}^d ) $ will be denoted by
$ {\mathcal S} ^{(1)'}   (\mathbb{R}^d ). $

\par

The space $  {\mathcal S} ^{(1)} (\mathbb{R}^d ) $ is nuclear, and we will use the following kernel theorem in the context of $ {\mathcal S}^{( 1)} (\mathbb{R}^d) $.

\begin{thm} \label{kernelteorema}
Let $ \mathcal{L}_b (\mathcal{A}, \mathcal{B}) $ denote the continuous linear mapping between the spaces
$\mathcal{A}$ and $ \mathcal{B} $.
Then the following isomorphisms hold:
\begin{itemize}
\item[1)]$ \displaystyle {\mathcal S}^{(1)} (\mathbb{R}^{d_1}) \hat{\otimes} {\mathcal S}^{(1)} (\mathbb{R}^{d_2}) \cong
{\mathcal S}^{(1)} (\mathbb{R}^{d_1+d_2}) \cong
\mathcal{L}_b ( {\mathcal S}^{(1)'} (\mathbb{R}^{d_1}), {\mathcal S}^{(1)} (\mathbb{R}^{d_2})),
$
\item[2)]
$ \displaystyle
{\mathcal S}^{(1)'} (\mathbb{R}^{d_1}) \hat{\otimes} {\mathcal S}^{(1)'} (\mathbb{R}^{d_2}) \cong
{\mathcal S}^{(1)'} (\mathbb{R}^{d_1+d_2}) \cong
\mathcal{L}_b ( {\mathcal S}^{(1)} (\mathbb{R}^{d_1}), {\mathcal S}^{(1)'} (\mathbb{R}^{d_2})).
$
\end{itemize}
\end{thm}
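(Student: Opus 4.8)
The plan is to reduce the statement to Grothendieck's abstract kernel theorem for nuclear Fréchet spaces, so that the only space-specific input is the ``exponential law'' $\mathcal{S}^{(1)}(\mathbb{R}^{d_1}) \hat{\otimes} \mathcal{S}^{(1)}(\mathbb{R}^{d_2}) \cong \mathcal{S}^{(1)}(\mathbb{R}^{d_1+d_2})$. First one checks that $\mathcal{S}^{(1)}(\mathbb{R}^d)$ is a Fréchet space: the countable family of norms $f \mapsto \|f(x)e^{j|x|}\|_{L^\infty} + \|\hat f(\omega)e^{j|\omega|}\|_{L^\infty}$, $j\in\mathbb{N}$, is fundamental (exponential decay of $\hat f$ absorbs polynomial factors and hence controls all derivatives) and completeness is routine; by the references cited above it is moreover nuclear, hence reflexive. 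Consequently $\mathcal{S}^{(1)}(\mathbb{R}^d) = (\mathcal{S}^{(1)'}(\mathbb{R}^d))'_b$, and on every tensor product formed below the projective and injective topologies coincide, so $\hat{\otimes}$ is unambiguous.

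The core step is the exponential law. I would prove it by passing to Hermite coefficients: $f \mapsto (\langle f, h_\alpha\rangle)_{\alpha\in\mathbb{N}^d}$ is a topological isomorphism of $\mathcal{S}^{(1)}(\mathbb{R}^d)$ onto a nuclear Köthe sequence space $\lambda(\mathbb{N}^d)$ whose weight system is (equivalent to) $\{\alpha \mapsto e^{r|\alpha|^{1/2}} : r>0\}$. Since $|(\alpha,\beta)|^{1/2} \le |\alpha|^{1/2} + |\beta|^{1/2} \le \sqrt2\,|(\alpha,\beta)|^{1/2}$, the product weight system on $\mathbb{N}^{d_1}\times\mathbb{N}^{d_2}$ is equivalent to the weight system on $\mathbb{N}^{d_1+d_2}$, whence $\lambda(\mathbb{N}^{d_1}) \hat{\otimes} \lambda(\mathbb{N}^{d_2}) \cong \lambda(\mathbb{N}^{d_1+d_2})$ and the exponential law follows. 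Equivalently one may argue directly: the algebraic tensor product is dense by truncating Hermite expansions, and the submultiplicativity $e^{h|(x_1,x_2)|} \le e^{h|x_1|}e^{h|x_2|} \le e^{2h|(x_1,x_2)|}$, together with the analogous frequency estimate, shows that the weighted-sup norms on $\mathcal{S}^{(1)}(\mathbb{R}^{d_1+d_2})$ and the projective tensor norms built from the one-variable norms are equivalent fundamental systems; completeness then gives the isomorphism.

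Granting the exponential law, part 1) is immediate from Grothendieck's theorem: for $E$ nuclear Fréchet and $F$ complete, the canonical map $E \hat{\otimes} F \to \mathcal{L}_b(E'_b, F)$ sending $f\otimes g$ to $u\mapsto\langle u,f\rangle\,g$ is an isomorphism; taking $E = \mathcal{S}^{(1)}(\mathbb{R}^{d_1})$, $F = \mathcal{S}^{(1)}(\mathbb{R}^{d_2})$ and combining with the exponential law yields both isomorphisms in 1). For part 2) I take strong duals in 1): by nuclearity and reflexivity, $\bigl(\mathcal{S}^{(1)}(\mathbb{R}^{d_1}) \hat{\otimes} \mathcal{S}^{(1)}(\mathbb{R}^{d_2})\bigr)'_b \cong \mathcal{S}^{(1)'}(\mathbb{R}^{d_1}) \hat{\otimes} \mathcal{S}^{(1)'}(\mathbb{R}^{d_2})$, while by the exponential law and the definition of the dual the left-hand side is $\mathcal{S}^{(1)'}(\mathbb{R}^{d_1+d_2})$; a further application of the Grothendieck isomorphism with the spaces and their duals interchanged (legitimate since $\mathcal{S}^{(1)}$ is reflexive) identifies $\mathcal{S}^{(1)'}(\mathbb{R}^{d_1}) \hat{\otimes} \mathcal{S}^{(1)'}(\mathbb{R}^{d_2})$ with $\mathcal{L}_b(\mathcal{S}^{(1)}(\mathbb{R}^{d_1}), \mathcal{S}^{(1)'}(\mathbb{R}^{d_2}))$.

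The functional-analytic manipulations in the last paragraph are routine once nuclearity is in hand; the genuine work is the exponential law, and within it the topological point that separating the variables into the two groups loses no decay. This is precisely why the detour through Köthe sequence spaces is convenient, since there the matching of the weight systems is a one-line computation, whereas a direct comparison of the seminorms requires a little care.
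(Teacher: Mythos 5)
Your proposal is correct in substance. Note that the paper itself gives no proof of this theorem: it is stated as a special case of \cite[Theorem 2.5]{Teof2015} (see also \cite{Prang}), and the proof is omitted. The route you take --- nuclearity of ${\mathcal S}^{(1)}$, the Hermite-coefficient realization as a K\"othe sequence space with weights $e^{r|\alpha|^{1/2}}$ to obtain the exponential law ${\mathcal S}^{(1)}(\mathbb{R}^{d_1})\hat{\otimes}{\mathcal S}^{(1)}(\mathbb{R}^{d_2})\cong{\mathcal S}^{(1)}(\mathbb{R}^{d_1+d_2})$, and then Grothendieck's abstract kernel theorem together with duality of completed tensor products of nuclear Fr\'echet spaces --- is precisely the standard argument underlying those references, so your sketch is an accurate reconstruction of the omitted proof rather than a genuinely different method. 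The only fragile point is the parenthetical ``direct'' alternative for the exponential law (comparing weighted sup-norms with projective tensor norms), which you rightly downgrade in favour of the K\"othe-space computation; as your primary argument does not rely on it, there is no gap.
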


Theorem \ref{kernelteorema} is a special case of \cite[Theorem 2.5]{Teof2015}, see also \cite{Prang},
so we omit the proof. We refer to the classical reference \cite{Trev} for kernel theorems and nuclear spaces,
and in particular to Theorem 51.6 and its Corollary related to $  {\mathcal S} (\mathbb{R}^d ) $ and
$  {\mathcal S}' (\mathbb{R}^d ) $, which will be used later on.

\par

The isomorphisms in Theorem \ref{kernelteorema} 2)  imply that for a given kernel-distribution
$ k(x,y) $ on $\mathbb{R}^{d_1+d_2}$ we may associate a continuous linear mapping
$k$ of $  {\mathcal S}^{(1)} (\mathbb{R}^{d_2})$ into
$ {\mathcal S}^{(1)'} (\mathbb{R}^{d_1})$ as follows:
$$
\langle k_\varphi, \phi \rangle = \langle k(x,y), \phi(x) \varphi(y) \rangle, \;\;\; \phi \in
 {\mathcal S}^{(1)} (\mathbb{R}^{d_1}),
$$
which is commonly written as $k_\varphi (\cdot) = \int k(\cdot, y )\varphi(y) dy.$
The correspondence between $k(x,y) $ and $k$ is an isomorphism and this fact will be used
in the proof of Theorem \ref{Weyl connection lemma}.

\par

Let  $\sigma \in {\mathcal S}^{( 1)} (\mathbb{R}^{2d})$. Then the
Weyl pseudodifferential operator $ L_\sigma$ with the Weyl symbol $\sigma$
can be defined as the oscillatory integral:
$$
L_\sigma f (x) = \iint \sigma (\frac{x+y}{2}, \omega) f(y) e^{2\pi i (x-y)\cdot \omega} dy d\omega, \;\;\;
f \in {\mathcal S}^{( 1)} (\mathbb{R}^{d}).
$$
This definition extends to each $\sigma \in {\mathcal S}^{( 1)'} (\mathbb{R}^{2d})$, so that
$L_\sigma$ is a continuous mapping from $ {\mathcal S}^{( 1)} (\mathbb{R}^{2d}) $ to
$ {\mathcal S}^{( 1)'} (\mathbb{R}^{2d})$.
If
\begin{equation} \label{cross-Wigner}
W(f,g)(x,\omega)= \int f(x+\frac{t}2)\overline{g(x-\frac{t}2)} e^{-2\pi     i\omega t}\,dt,  \quad\quad f,g\in {\mathcal S}^{( 1)} (\mathbb{R}^d),
\end{equation}
denotes the  Wigner transform, also known as the cross-Wigner distribution,
then the following formula holds:
$$
\langle L_\sigma f,g \rangle = \langle \sigma, W(g,f) \rangle,
\quad\quad f,g\in {\mathcal S}^{( 1)} (\mathbb{R}^d),
$$
for each  $\sigma \in {\mathcal S}^{( 1)'} (\mathbb{R}^{2d})$,  see e.g. \cite{folland89, book, Wong1998}.

\section{Modulation Spaces} \label{sec01}

In this section we collect some facts on modulation spaces which will be used in Section \ref{sec03}.
First we introduce the short-time Fourier transform in the context of duality between the Gelfand--Shilov space
$ {\mathcal S}^{( 1)} (\mathbb{R}^d) $ and its dual space of tempered ultra-distributions
$  {\mathcal S}^{( 1)'} (\mathbb{R}^{2d})$ as follows.

\par

The short-time Fourier transform (STFT in the sequel)
of $f \in  {\mathcal S} ^{(1)} (\mathbb{R}^d ) $ with respect to the window
$g  \in {\mathcal S} ^{(1)} ( \mathbb{R}^d) \setminus 0 $
is defined by
 \begin{equation}
   \label{STFT!}
   V_g f(x,\omega)= \langle f,M_\omega T_x g \rangle =
   \int_{\mR ^d}
 f(t)\, {\overline {g(t-x)}} \, e^{-2\pi i\omega t}\,dt,
 \end{equation}
where the
translation operator  $T_x$  and the modulation operator $ M_{\omega }$  are given by
\begin{equation}   \label{trans-mod}
 T_x f(\cdot)=f(\cdot-x)\quad{\rm and}\quad M_{\omega }f(\cdot)= e^{2\pi i \omega \cdot}f(\cdot) \;\;\; x,\omega \in \mathbb{R}^d.
\end{equation}
\par

The map $(f,g )\mapsto V_g f$ from
$\mathcal{S} ^{(1)}(\mathbb{R}^d)\otimes \mathcal{S} ^{(1)}(\mathbb{R}^d)$ to
$\mathcal{S} ^{(1)}(\mathbb{R}^{2d})$
extends uniquely to a continuous operator from
$\mathcal{S} ^{(1)'} (\mathbb{R}^d)\otimes \mathcal{S} ^{(1)'}(\mathbb{R}^d)$
to $\mathcal{S} ^{(1)'} (\mathbb{R}^{2d})$ by duality.

\par

Moreover, for a fixed $g \in {\mathcal S}^{(1)} (\mathbb{R}^d )\setminus 0$
the following characterization holds:
$$
f \in  {\mathcal S}^{(1)} (\mathbb{R}^d ) \quad  \Longleftrightarrow \quad
 V_g f  \in {\mathcal S}^{(1)} (\mathbb{R}^{2d} ).
$$

\par

We recall the notation from \cite{Teof2018} related to the bilinear case. For given $ \varphi _1, \varphi _2, f_1, f_2 \in {\mathcal S} ^{(1)} (\mathbb{R}^d )$
we put
\begin{multline} \label{STFTtensor}
V_{\varphi _1 \otimes \varphi _2} (f_1 \otimes f_2) (x,\omega)
= \int_{\mathbb{R}^{2d}} f_1 (t_1) f_2 (t_2) \overline{M_{\omega_1} T_{x_1} \varphi _1 (t_1) M_{\omega_2} T_{x_2} \varphi _2 (t_2)} dt_1 dt_2
\\
= \int_{\mathbb{R}^{2d}} (f_1 \otimes f_2) (t) \overline{(M_{\omega_1} T_{x_1} \varphi _1 \otimes M_{\omega_2} T_{x_2} \varphi _2 )(t)} dt,
\end{multline}
where $  x = (x_1,x_2),$ $ \omega = (\omega_1 ,\omega_2 ),$ $ t = (t_1,t_2) , $
$x_1,x_2,  \omega_1 ,\omega_2, t_1,t_2  \in \mathbb{R}^d.$

To give an interpretation of multilinear operators in the weak sense we note that, when
$ \vec{f} = (f_1,f_2, \dots, f_n) $ and $ \vec{\varphi} = (\varphi_1, \varphi_2, \dots, \varphi_n) $,
$ f_j, \varphi _j   \in {\mathcal S} ^{(1)} (\mathbb{R}^d )$, $ j = 1,2,\dots, n,$
the equation \eqref{STFTtensor}
becomes
\begin{equation} \label{STFTtensor-n}
V_{\vec{\varphi}} \vec{f} (x,\omega)
= \int_{\mathbb{R}^{nd}} \vec{f} (t) \prod_{j=1} ^n \overline{ M_{\omega_j} T_{x_j} \varphi _j (t_j)} dt,
\end{equation}
see also \eqref{product} for the notation.

\par

%

We refer to \cite{GZ01, T2, Teof2015, Teof2016, Toft2012} for  more details on STFT  in other spaces of
Gelfand-Shilov type.
Since we restrict ourselves to weighted
modulation spaces  with polynomial weights in this paper,
we proceed by using
the duality between  $  {\mathcal S} $ and  $  {\mathcal S}' $  instead of the more general
duality between    $  {\mathcal S} ^{(1)} $ and $  {\mathcal S} ^{(1)'} $.
Related results in the framework
of subexponential and superexponential weights can be found in e.g. \cite{CPRT1, CPRT2,Teof2015,Toft2012},
and leave the study of multilinear localization operators in that case for a separate contribution.

\par

Modulation spaces \cite{F1, book} are defined through decay and integrability conditions on STFT,
which makes them suitable for time-frequency analysis, and for the study of localization operators
in particular. They are defined in terms of weighted mixed-norm Lebesgue spaces.

In general, a weight $ w(\cdot) $ on $ \mathbb{R}^{d}$ is a non-negative and continuous function.
The weighted Lebesgue space  $ L^p _w (\mathbb{R}^d)$, $ p \in [1,\infty] $, is the Banach space with the norm
$$
\| f \|_{L^p _w } = \| f w \|_{L^p} = \left ( \int |f(x)|^p w(x) ^p dx \right )^{1/p},
$$
and with the usual modification when $ p=\infty$. When $ w(x) =
\langle  x \rangle ^t $, $t\in \mathbb{R},$ we use the notation $ L^p _t (\mathbb{R}^d) $ instead.

Similarly,
the weighted  mixed-norm space $ L^{p,q} _w (\mathbb{R}^{2d})$, $ p,q \in [1,\infty] $,
consists of (Lebesgue) measurable functions on $ \mathbb{R}^{2d}$ such that
$$
\| F \|_{ L^{p,q} _w} = \left ( \int_{ \mathbb{R}^{d}} \left ( \int_{ \mathbb{R}^{d}}
| F(x,\omega)|^p w(x,\omega) ^p dx \right )^{q/p} d\omega \right )^{1/q} < \infty.
$$
where $ w(x,\omega ) $ is a weight on $ \mathbb{R}^{2d}$.

In particular, when $ w(x,\omega) =
\langle  x \rangle ^t \langle \omega \rangle ^s, $ $s,t\in \mathbb{R},$ we use the notation
$ L^{p,q} _w (\mathbb{R}^{2d}) $ $= L^{p,q} _{s,t} (\mathbb{R}^{2d})$.

Now, modulation space $M^{p,q}_{s,t}(\mathbb{R}^d)$  consists of distributions whose STFT belong to
$ L^{p,q} _{s,t} (\mathbb{R}^{2d})$:

\begin{defn} \label{modspaces}
Let $\phi \in \mathcal{S}(\mathbb{R}^d)  \setminus 0$, $s,t\in \mathbb{R}$, and $p,q\in [1,\infty]$. The
\emph{modulation space} $M^{p,q}_{s,t}(\mathbb{R}^d)$ consists of all
$f\in \mathcal{S}'(\mathbb{R}^d)$ such that
$$
\| f \|_{M^{p,q}_{s,t}} \equiv \left  ( \int _{\mathbb{R}^d} \left ( \int _{\mathbb{R}^d}
|V_\phi f(x,\omega )\langle  x \rangle ^t
\langle \omega \rangle ^s|^p\, dx  \right )^{q/p}d\omega  \right )^{1/q}<\infty
$$
(with obvious interpretation of the integrals when $p=\infty$ or $q=\infty$).
\end{defn}

\par

In special cases we use the usual abbreviations: $M^{p,p} _{0,0} = M^{p},$
$M^{p,p} _{t,t} = M^{p} _t,$  etc.

For the consistency, and according to \eqref{STFTtensor-n}, we denote by $ \mathcal{M}^{p,q}_{s,t} (\mathbb{R}^{nd}) $ the set of
$ \vec{f} = (f_1,f_2, \dots, f_n), $ $ f_j \in {\mathcal S}' (\mathbb{R}^{d})$,  $ j = 1,2,\dots,n$, such that
\begin{equation} \label{modnormtensor}
\| \vec{f} \|_{\mathcal{M}^{p,q}_{s,t}} \equiv \left  ( \int _{\mathbb{R}^{2d}} \left ( \int _{\mathbb{R}^{2d}}
| V_{\vec{\varphi}} \vec{f} (x,\omega)
\langle  x \rangle ^t
\langle \omega \rangle ^s|^p\, dx  \right )^{q/p}d\omega  \right )^{1/q}<\infty,
\end{equation}
where $ \vec{\varphi} = (\varphi_1, \varphi_2, \dots, \varphi_n) $,
$  \varphi _j   \in {\mathcal S}  (\mathbb{R}^d ) \setminus 0$, $ j = 1,2,\dots, n,$ is a given window function.

\par

The kernel theorem for $  {\mathcal S} (\mathbb{R}^{d})$ and $ {\mathcal S}' (\mathbb{R}^{d})$
(see \cite{Trev}) implies that there is an isomorphism between $ \mathcal{M}^{p,q}_{s,t} (\mathbb{R}^{nd}) $
and $M^{p,q}_{s,t}(\mathbb{R}^{nd})$ (which commutes with the operators from \eqref{trans-mod}). This allows us to identify
$ \vec{f} \in  \mathcal{M}^{p,q}_{s,t} (\mathbb{R}^{nd}) $ with (its isomorphic image) $ F \in M^{p,q}_{s,t}(\mathbb{R}^{nd})$
(and vice versa). We will use this identification whenever convenient and without further mentioning.

\begin{rem}
The original definition of modulation spaces given in \cite{F1} deals with more general
{\em submultiplicative} weights. We restrict ourselves to the weights of the form $ w(x,\omega)$ $=
\langle  x \rangle ^t \langle \omega \rangle ^s, $ $s,t\in \mathbb{R},$
since the convolution and multiplication estimates which will be used later on
are formulated in terms of weighted spaces with such polynomial weights.
As already mentioned, weights of  exponential type growth are used
in the study of Gelfand-Shilov spaces and their duals in
cf. \cite{GZ01, T2, CPRT2, Toft2012}. We refer to \cite{Gro07} for a
survey on the most important types of weights commonly used in time-frequency analysis.
\end{rem}

\par

The following theorem lists some basic properties of modulation spaces.
We refer to \cite{F1, book} for the proof.

\begin{thm} \label{modproerties}
Let $p,q,p_j,q_j\in [1,\infty ]$ and $s,t,s_j,t_j\in \mathbb{R}$, $j=1,2$. Then:
\begin{enumerate}
\item[{\rm{1)}}] $M^{p,q}_{s,t}(\mathbb{R}^d)$ are Banach spaces, independent of the choice of
$\phi \in \mathcal{S}(\mathbb{R}^d) \setminus 0$;

\item[{\rm{2)}}] if  $p_1\le p_2$, $q_1\le q_2$, $s_2\le s_1$ and
$t_2\le t_1$, then
$$
\mathcal{S}(\mathbb{R}^d)\subseteq M^{p_1,q_1}_{s_1,t_1}(\mathbb{R}^d)
\subseteq M^{p_2,q_2}_{s_2,t_2}(\mathbb{R}^d)\subseteq
\mathcal{ S}'(\mathbb{R}^d);
$$

\item[{\rm{3)}}] $ \displaystyle
\cap _{s,t} M^{p,q}_{s,t}(\mathbb{R}^d)=\mathcal{ S}(\mathbb{R}^d),
\quad
\cup _{s,t}M^{p,q}_{s,t}(\mathbb{R}^d)=\mathcal{ S}'(\mathbb{R}^d); $

\item[{\rm{4)}}] For   $p,q\in [1,\infty )$, the dual of $ M^{p,q}_{s,t}(\mathbb{R}^d)$ is
$ M^{p',q'}_{-s,-t}(\mathbb{R}^d),$ where $ \frac{1}{p} +  \frac{1}{p'} $ $ =
 \frac{1}{q} +  \frac{1}{q'} $ $ =1.$
\end{enumerate}
\end{thm}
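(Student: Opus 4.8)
The plan is to obtain all four assertions from the standard machinery of the short-time Fourier transform --- the inversion formula and the change-of-window inequality --- together with the fact that the polynomial weights $\langle x\rangle^{t}\langle\omega\rangle^{s}$ are moderate; the needed facts about the STFT are classical, see \cite{book}. First I would record the weak inversion formula $f=\|\phi\|_{L^2}^{-2}\iint V_\phi f(x,\omega)\,M_\omega T_x\phi\,dx\,d\omega$ on $\mathcal{S}'(\mathbb{R}^d)$ and the pointwise estimate
$$
|V_\phi f(x,\omega)|\ \le\ \|\psi\|_{L^2}^{-2}\,\bigl(|V_\psi f|*|V_\phi\psi|\bigr)(x,\omega),\qquad f\in\mathcal{S}'(\mathbb{R}^d),\ \ \phi,\psi\in\mathcal{S}(\mathbb{R}^d)\setminus 0,
$$
obtained by inserting the inversion formula for $V_\psi$ into the definition of $V_\phi f$ and discarding the unimodular phase. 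Since $V_\phi\psi\in\mathcal{S}(\mathbb{R}^{2d})$, it lies in $L^1$ against every polynomial weight, and moderateness of $\langle x\rangle^{t}\langle\omega\rangle^{s}$ then makes convolution with $V_\phi\psi$ bounded on $L^{p,q}_{s,t}(\mathbb{R}^{2d})$; applying the displayed inequality with $(\phi,\psi)$ and then with $(\psi,\phi)$ gives $\|V_\phi f\|_{L^{p,q}_{s,t}}\asymp\|V_\psi f\|_{L^{p,q}_{s,t}}$, which is the window independence in 1). For completeness one notes that, for a fixed window, $V_\phi$ is an isometry of $M^{p,q}_{s,t}(\mathbb{R}^d)$ onto a \emph{closed} subspace of the Banach space $L^{p,q}_{s,t}(\mathbb{R}^{2d})$: a Cauchy sequence of STFTs converges in $L^{p,q}_{s,t}$ to some $F$, and applying convolution with $V_\phi\phi$ (bounded there) together with the inversion formula shows that $F=V_\phi f$ with $f=\|\phi\|_{L^2}^{-2}\iint F(x,\omega)\,M_\omega T_x\phi\,dx\,d\omega\in M^{p,q}_{s,t}(\mathbb{R}^d)$.

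For 2), the embedding $\mathcal{S}(\mathbb{R}^d)\subseteq M^{p_1,q_1}_{s_1,t_1}(\mathbb{R}^d)$ is immediate, since $f\in\mathcal{S}$ forces $V_\phi f\in\mathcal{S}(\mathbb{R}^{2d})$, hence $V_\phi f$ lies in every weighted $L^{p,q}_{s,t}$. For $M^{p_2,q_2}_{s_2,t_2}(\mathbb{R}^d)\subseteq\mathcal{S}'(\mathbb{R}^d)$ I would pair: for $g\in\mathcal{S}$ the inversion formula gives $\langle f,g\rangle=\|\phi\|_{L^2}^{-2}\langle V_\phi f,V_\phi g\rangle_{L^2(\mathbb{R}^{2d})}$, and H\"older bounds this by $\|f\|_{M^{p_2,q_2}_{s_2,t_2}}\,\|V_\phi g\|_{L^{p_2',q_2'}_{-s_2,-t_2}}$, which is finite and continuous in $g$ for the topology of $\mathcal{S}$. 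For the middle inclusion I would use the pointwise estimate again: multiplying $|V_\phi f|\le\|\phi\|_{L^2}^{-2}|V_\phi f|*|V_\phi\phi|$ by $\langle x\rangle^{t_2}\langle\omega\rangle^{s_2}$ and distributing the weight by moderateness --- using $s_2\le s_1$, $t_2\le t_1$, so that the weight passes to the $|V_\phi f|$-factor as $\langle x\rangle^{t_1}\langle\omega\rangle^{s_1}$ while only a polynomial weight remains on the $|V_\phi\phi|$-factor --- one is left with the mixed-norm Young inequality $L^{p_1,q_1}*L^{r_1,r_2}\to L^{p_2,q_2}$ with the exponents forced by $p_1\le p_2$, $q_1\le q_2$; since a polynomial times $V_\phi\phi$ is still Schwartz, the $L^{r_1,r_2}$-norm is finite, whence $\|f\|_{M^{p_2,q_2}_{s_2,t_2}}\lesssim\|f\|_{M^{p_1,q_1}_{s_1,t_1}}$. (Equivalently, one may pass to a Wiener-amalgam / sequence-space description of the norm and invoke nestedness of weighted $\ell^{p,q}$ spaces.)

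Item 3 is then a bootstrap on top of 1) and 2). For $\cap_{s,t}M^{p,q}_{s,t}=\mathcal{S}$, the inclusion $\supseteq$ is part of 2); conversely, if $V_\phi f\in L^{p,q}_{s,t}$ for all $s,t$, the pointwise estimate shows $V_\phi f$ decays faster than any polynomial, and since $f\in\mathcal{S}'$ (by 2)) the derivatives of $V_\phi f$ are, up to polynomial factors, finite linear combinations of transforms $V_{\phi_\alpha}f$ with $\phi_\alpha\in\mathcal{S}$ and hence share that rapid decay, so $V_\phi f\in\mathcal{S}(\mathbb{R}^{2d})$ and the inversion formula returns $f\in\mathcal{S}(\mathbb{R}^d)$. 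For $\cup_{s,t}M^{p,q}_{s,t}=\mathcal{S}'$, again $\subseteq$ is in 2), while any $f\in\mathcal{S}'$ satisfies $|V_\phi f(x,\omega)|\lesssim\langle(x,\omega)\rangle^{N}$ for some $N$ (the Schwartz seminorms of $M_\omega T_x\phi$ grow polynomially in $(x,\omega)$), so $V_\phi f\in L^{p,q}_{s,t}$ once $s,t$ are sufficiently negative, i.e. $f\in M^{p,q}_{s,t}$.

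Finally, for the duality in 4) the pairing estimate from 2) already gives $M^{p',q'}_{-s,-t}\hookrightarrow (M^{p,q}_{s,t})'$; conversely, given $\ell\in (M^{p,q}_{s,t})'$, I would transport it through the isometric embedding $V_\phi$, extend it by Hahn--Banach to $L^{p,q}_{s,t}(\mathbb{R}^{2d})$ --- whose dual is $L^{p',q'}_{-s,-t}$ precisely because $p,q<\infty$ --- represent it by some $H$ there, and set $g:=\|\phi\|_{L^2}^{-2}\iint H(x,\omega)\,M_\omega T_x\phi\,dx\,d\omega$; boundedness of convolution with $V_\phi\phi$ on $L^{p',q'}_{-s,-t}$ gives $g\in M^{p',q'}_{-s,-t}$, and unwinding the inversion formula yields $\ell=\langle\,\cdot\,,g\rangle$. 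The one genuinely non-formal ingredient --- and thus the main obstacle --- is the change-of-window inequality together with the closedness of the STFT range in $L^{p,q}_{s,t}$; once these are available, everything else reduces to weighted mixed-norm Young estimates and routine Hahn--Banach/duality. In particular the non-trivial inclusion $M^{p_1,q_1}_{s_1,t_1}\subseteq M^{p_2,q_2}_{s_2,t_2}$ must go through the convolution bound (or the amalgam discretization), since it does \emph{not} reduce to nestedness of $L^{p}$ spaces on $\mathbb{R}^{2d}$, which fails.
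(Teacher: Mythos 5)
The paper does not prove this theorem at all; it defers to \cite{F1, book}, and your outline is precisely the standard argument found there (the change-of-window convolution inequality $|V_\phi f|\lesssim |V_\psi f|\ast|V_\phi\psi|$, the reproducing identity and closed range of the STFT for completeness, weighted mixed-norm Young estimates for the embeddings, and Hahn--Banach plus the Benedek--Panzone duality of $L^{p,q}$ for item 4). Your sketch is correct and matches the intended proof, including the correct observation that the embedding in 2) must pass through the convolution relation rather than any (false) nestedness of Lebesgue spaces on $\mathbb{R}^{2d}$.
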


Modulation spaces include the following well-know function spaces:
\begin{itemize}
\item[a)] $ M^2 (\mathbb{R}^d) = L^2 (\mathbb{R}^d),$  and $ M^2 _{t,0}(\mathbb{R}^d) = L^2 _t (\mathbb{R}^d);$
\item[b)] The Feichtinger algebra: $ M^1 (\mathbb{R}^d) = S_0 (\mathbb{R}^d);$
\item[c)] Sobolev spaces: $ M^2 _{0,s}(\mathbb{R}^d) = H^2 _s (\mathbb{R}^d) = \{ f \, | \,
\hat f (\omega) \langle \omega \rangle ^s \in  L^2 (\mathbb{R}^d)\};$
\item[d)] Shubin spaces: $ M^2 _{s}(\mathbb{R}^d) = L^2 _s (\mathbb{R}^d) \cap H^2 _s (\mathbb{R}^d) = Q_s (\mathbb{R}^d),$
cf. \cite{Shubin91}.
\end{itemize}

\par

To deal with duality when $ p q = \infty $  we observe that, by a slight modification of \cite[Lemma 2.2]{BGHKasso}
the following is true.

\begin{lemma} \label{dualnost}
Let $ L^0  (\mathbb{R}^{2nd}) $ denote the space of bounded, measurable functions on $ \mathbb{R}^{2nd}$ which vanish at infinity and put
\begin{eqnarray*}
\mathcal{M} ^{0,q} (\mathbb{R}^{nd}) & = & \{ \vec{f} \in \mathcal{M} ^{\infty,q} (\mathbb{R}^{nd}) \;\; | \;\;
 V_{\vec{\varphi}} \vec{f} \in L^0  (\mathbb{R}^{2nd})\}, \;\; 1 \leq q < \infty, \\
\mathcal{M} ^{p,0} (\mathbb{R}^{nd}) & = & \{ \vec{f} \in \mathcal{M} ^{p,\infty} (\mathbb{R}^{nd}) \;\; | \;\;
 V_{\vec{\varphi}} \vec{f} \in L^0  (\mathbb{R}^{2nd})\}, \;\; 1 \leq p < \infty, \\
\mathcal{M} ^{0,0} (\mathbb{R}^{nd}) & = & \{ \vec{f} \in \mathcal{M} ^{\infty,\infty} (\mathbb{R}^{nd}) \;\; | \;\;
 V_{\vec{\varphi}} \vec{f} \in L^0  (\mathbb{R}^{2nd})\},
\end{eqnarray*}
equipped with the norms of  $ \mathcal{M} ^{\infty,q},  \mathcal{M} ^{p,\infty}$ and $\mathcal{M} ^{\infty,\infty} $ respectively.
Then,
\begin{itemize}
\item[a)] $ \mathcal{M} ^{0,q} $ is $\mathcal{M} ^{\infty,q}-$closure of $ \mathcal{S} $ in $\mathcal{M} ^{\infty,q}$, hence is a closed subspace of
$\mathcal{M} ^{\infty,q}$. Likewise for $\mathcal{M} ^{p,0}$ and $\mathcal{M} ^{0,0}.$
\item[b)] The following duality results hold for $ 1\leq p ,q < \infty$:
$ (\mathcal{M} ^{0,q})' = \mathcal{M} ^{1,q'}, $
$ (\mathcal{M} ^{p,0})' = \mathcal{M} ^{p',1}, $ and $ (\mathcal{M} ^{0,0})' = \mathcal{M} ^{1,1}.$
\end{itemize}
\end{lemma}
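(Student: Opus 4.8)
The plan is to reduce the statement to the case of ordinary (scalar) modulation spaces on $\mathbb{R}^{nd}$ and then to run the standard closure-and-duality argument there. Recall the isomorphism $\mathcal{M}^{p,q}_{s,t}(\mathbb{R}^{nd})\cong M^{p,q}_{s,t}(\mathbb{R}^{nd})$ from Section~\ref{sec01}: under the natural identification of $\vec f=(f_1,\dots,f_n)$ with the tensor $f_1\otimes\cdots\otimes f_n$ and of $\vec\varphi$ with $\varphi_1\otimes\cdots\otimes\varphi_n$, one has $V_{\vec\varphi}\vec f=V_{\varphi_1\otimes\cdots\otimes\varphi_n}(f_1\otimes\cdots\otimes f_n)$ as functions on $\mathbb{R}^{2nd}$, and the mixed norm \eqref{modnormtensor} is exactly the $M^{p,q}_{s,t}(\mathbb{R}^{nd})$-norm. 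In particular this identification preserves the condition of belonging to $L^{0}(\mathbb{R}^{2nd})$, so, writing $N:=nd$, the spaces $\mathcal{M}^{0,q}$, $\mathcal{M}^{p,0}$, $\mathcal{M}^{0,0}$ correspond to the subspaces of $M^{\infty,q}(\mathbb{R}^N)$, $M^{p,\infty}(\mathbb{R}^N)$, $M^{\infty,\infty}(\mathbb{R}^N)$ consisting of the distributions whose short-time Fourier transform vanishes at infinity; denote these by $M^{0,q}$, $M^{p,0}$, $M^{0,0}$. It suffices to show that each is the closure of $\mathcal{S}$ in the respective ambient space and to identify its dual. This is \cite[Lemma~2.2]{BGHKasso} after replacing $d$ by $N$; I indicate the argument only for $M^{0,q}$, the other two cases being identical upon exchanging the roles of the two exponents.

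For part~a) fix $\phi\in\mathcal{S}(\mathbb{R}^N)$ with $\|\phi\|_2=1$ and use three standard facts: $V_\phi\colon M^{p,q}\to L^{p,q}(\mathbb{R}^{2N})$ is isometric with bounded left inverse given by the synthesis operator $V_\phi^\ast$; $V_\phi V_\phi^\ast F$ is the twisted convolution of $F$ with the Schwartz reproducing kernel $V_\phi\phi$, so that $|V_\phi V_\phi^\ast F|\le|F|\ast|V_\phi\phi|$; and, applying this to $F=V_\phi f$, the pointwise bound $|V_\phi f|\le|V_\phi f|\ast|V_\phi\phi|$, which by the mixed-norm convolution inequalities gives $\|V_\phi f\|_{L^\infty}\lesssim\|f\|_{M^{\infty,q}}$. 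If $f_k\in\mathcal{S}$ and $f_k\to f$ in $M^{\infty,q}$, this last bound forces $V_\phi f_k\to V_\phi f$ uniformly on $\mathbb{R}^{2N}$; since each $V_\phi f_k\in\mathcal{S}(\mathbb{R}^{2N})$ vanishes at infinity, so does $V_\phi f$, hence $f\in M^{0,q}$. Conversely, given $f\in M^{\infty,q}$ with $V_\phi f\in C_0(\mathbb{R}^{2N})$, set $f_R:=V_\phi^\ast(\chi_{B_R}V_\phi f)=\int_{|(x,\omega)|\le R}V_\phi f(x,\omega)\,M_\omega T_x\phi\,dx\,d\omega$, a Schwartz-valued integral over a compact set, so $f_R\in\mathcal{S}(\mathbb{R}^N)$. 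From $V_\phi f=V_\phi V_\phi^\ast V_\phi f$ one gets that $V_\phi(f-f_R)$ is the twisted convolution of $(1-\chi_{B_R})V_\phi f$ with $V_\phi\phi$, whence, by Young's inequality $L^{\infty,q}\ast L^{1}\hookrightarrow L^{\infty,q}$, $\|f-f_R\|_{M^{\infty,q}}\le\|(1-\chi_{B_R})V_\phi f\|_{L^{\infty,q}}\,\|V_\phi\phi\|_{L^{1}(\mathbb{R}^{2N})}$. The last factor is finite and the first tends to $0$ as $R\to\infty$ by dominated convergence: for each $\omega$ the quantity $\sup_{|(x,\omega)|>R}|V_\phi f(x,\omega)|$ tends to $0$ because $V_\phi f\in C_0$, while it is dominated by $\omega\mapsto\sup_x|V_\phi f(x,\omega)|\in L^q$. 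Therefore $M^{0,q}=\overline{\mathcal{S}}^{\,M^{\infty,q}}$, a closed subspace.

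For part~b) first note that Hölder's inequality for mixed norms, applied to $\langle V_\phi f,V_\phi g\rangle_{L^2(\mathbb{R}^{2N})}$, shows that the $L^2$ pairing extends to a bounded pairing $M^{1,q'}\times M^{\infty,q}\to\mathbb{C}$ with $|\langle f,g\rangle|\lesssim\|f\|_{M^{1,q'}}\|g\|_{M^{\infty,q}}$; restricting the second argument to $M^{0,q}$ and using the density of $\mathcal{S}$ from part~a), this gives a continuous injection $M^{1,q'}\hookrightarrow(M^{0,q})'$. For surjectivity and the norm equivalence I would discretize: choose a Gabor frame $\{M_{\beta l}T_{\alpha k}g\}_{k,l\in\mathbb{Z}^N}$ with $g$ and its dual window in $\mathcal{S}(\mathbb{R}^N)$ (see \cite{book}), so that the associated analysis and synthesis operators identify $M^{p,q}$ with a complemented subspace of the mixed sequence space $\ell^{p,q}(\mathbb{Z}^N\times\mathbb{Z}^N)$; the associated projection $P$ is bounded on $\ell^{p,q}$ for all $p,q$, and, since its matrix decays rapidly (both windows being Schwartz), it maps sequences vanishing at infinity to sequences vanishing at infinity. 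Consequently $M^{0,q}$ is a complemented subspace of $e^q$, the closure of the finitely supported sequences in $\ell^{\infty,q}$, and since the sequence-space duality $(e^q)'=\ell^{1,q'}$ holds with no measure-theoretic defect, $(M^{0,q})'$ is isomorphic to the range of the adjoint projection $P^\ast$ on $\ell^{1,q'}$; that range is in turn $\cong M^{1,q'}$, $P^\ast$ being the analysis-synthesis projection attached to the conjugated windows, and a routine check identifies this isomorphism with the pairing above. The cases $(M^{p,0})'=M^{p',1}$ and $(M^{0,0})'=M^{1,1}$ follow in the same way, with $e^q$ replaced by the appropriate mixed sequence spaces (``$\ell^p$ then $c_0$'' and ``$c_0$ then $c_0$'').

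The main obstacle is the duality in part~b). One cannot simply dualize the ambient mixed-norm space $L^{p,q}$ one exponent at a time when $p=\infty$, because the predual relevant in the first variable is $C_0$ and $(C_0)'$ is the space of measures, strictly larger than $L^1$, so the naive bookkeeping overshoots the target $M^{1,q'}$. Passing to a Gabor frame removes this difficulty: the ambient space becomes a genuine sequence space, where $(c_0)'=\ell^1$ holds exactly, and the rapid decay of the frame projection's matrix is precisely what forces $(M^{0,q})'$ to be $M^{1,q'}$ and nothing larger. By contrast, the reduction in the first paragraph and the approximation argument in part~a) are routine.
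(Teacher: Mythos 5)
Your proposal is correct, but note that the paper does not actually prove Lemma \ref{dualnost}: it asserts that the statement holds ``by a slight modification of \cite[Lemma 2.2]{BGHKasso}'', and the slight modification in question is precisely your first paragraph, namely the identification of $\mathcal{M}^{p,q}(\mathbb{R}^{nd})$ with the scalar modulation space $M^{p,q}(\mathbb{R}^{nd})$ via the kernel theorem, which the paper sets up immediately after \eqref{modnormtensor} exactly so that this reduction is available. Your parts a) and b) then supply a self-contained proof of the cited lemma by the standard route: the reproducing formula $V_\phi f = V_\phi V_\phi^\ast V_\phi f$ together with truncation of the STFT over balls for the closure statement, and the Gabor-frame retraction of $M^{0,q}$ onto a complemented subspace of the mixed sequence space for the duality. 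Both steps are sound, and you correctly isolate the one genuine pitfall, that $(C_0)'$ is a space of measures rather than $L^1$, so the duality cannot be read off from the continuous mixed-norm spaces and must be transferred to sequence spaces where $(c_0)'=\ell^1$ holds exactly; this is indeed what forces the Gabor-frame detour. The only places where detail is thin are routine: that the frame projection preserves the subspace $e^q$ rests on the off-diagonal decay of its matrix (which you invoke), and the identifications of $e^q$ with $\ell^q(c_0)$ and of its dual with $\ell^{1,q'}$, as well as the compatibility of the resulting isomorphism with the $L^2$-extension pairing, deserve a line each. Nothing is missing or wrong relative to what the paper delegates to the literature.
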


From now on we will use these duality relations in the cases $ p  = \infty $ and/or $  q = \infty $
without further explanations.

\par

For the results on multiplication and convolution in modulation spaces and in weighted Lebesgue spaces we first
introduce the \emph{Young functional}:
\begin{equation} \label{R-functional}
\masfR (\mathrm{p}) = \masfR (p_0,p_1,p_2) \equiv 2-\frac 1{p_0}-\frac 1{p_1}-\frac 1{p_2},\qquad \mathrm{p}=
(p_0,p_1,p_2)\in [1,\infty]^3.
\end{equation}

When $ \masfR (\mathrm{p}) = 0, $ the Young inequality for convolution reads as
$$
\| f_1 * f_2 \|_{L^{p_0 ' }} \leq \| f_1  \|_{L^{p_1 }} \| f_2 \|_{L^{p_2}}, \;\;\;
f_j \in L^{p_j }(\mathbb{R}^d), \;\; j = 1,2.
$$

%

The following theorem is an extension of the Young inequality to the case of weighted Lebesgue spaces and
modulation spaces when $ 0\le \masfR (\mathrm{p}) \le 1/2$.

\begin{thm} \label{mainconvolution}
Let $s_j,t_j \in \mathbb R$, $p_j,q_j \in [1,\infty] $, $j=0,1,2$.
Assume that $0\le \masfR (\mathrm{p}) \le 1/2$, $\masfR (\mathrm{q})\le 1$,
\begin{eqnarray}
0&\leq t_j+t_k,   & j,k=0,1,2,  \quad j\neq k, \label{lastineq2A}
\\
0 &\leq t_0+t_1 + t_2 - d \cdot  \masfR (\mathrm{p}), & \text{and} \label{lastineq2B}
\\
0 & \le s_0+s_1+s_2, & \label{lastineq2C}
\end{eqnarray}
with strict inequality in \eqref{lastineq2B} when $\masfR (\mathrm{p})>0$ and $t_j=d\cdot
\masfR (\mathrm{p}) $ for some $j=0,1,2$.

Then  $(f_1,f_2)\mapsto
f_1*f_2$ on $C_0^\infty (\mathbb{R}^d)$ extends uniquely to a continuous
map from
\begin{enumerate}
\item[{\rm{1)}}]  $ L^{p_1} _{t_1}(\mathbb{R}^d) \times  L^{p_2} _{t_2}(\mathbb{R}^d)$ to
$L^{p_0'} _{-t_0}(\mathbb{R}^d)$;

\item[{\rm{2)}}]
$M^{p_1,q_1} _{s_1,t_1}(\mathbb{R}^d) \times  M^{p_2,q_2} _{s_2,t_2}(\mathbb{R}^d)$ to
$M^{p_0',q_0'} _{-s_0,-t_0}(\mathbb{R}^d)$.
\end{enumerate}
\end{thm}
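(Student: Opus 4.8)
\textbf{Proof plan for Theorem \ref{mainconvolution}.}

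The plan is to reduce both parts to known results. Part 1) on weighted Lebesgue spaces is the analytic core and I expect it to be the main obstacle; once it is in hand, Part 2) follows from Part 1) by an STFT argument that is essentially the standard one used to promote convolution estimates from $L^p$ to $M^{p,q}$.

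For Part 1), first note that the endpoint case $\masfR(\mathrm{p})=0$ is classical: the hypotheses \eqref{lastineq2A} and \eqref{lastineq2B} (with \eqref{lastineq2B} reading $t_0+t_1+t_2\ge 0$) guarantee that the polynomial weights are controlled by the submultiplicativity estimate $\langle x\rangle^{-t_0}\lesssim \langle x-y\rangle^{t_1}\langle y\rangle^{t_2}$ on the region where the relevant exponents are nonnegative, and splitting the remaining region and using the ordinary Young inequality gives the bound; this is exactly the weighted Young inequality as found in e.g. the references on modulation-space convolution. For $0<\masfR(\mathrm{p})\le 1/2$ one does not have $p_1,p_2,p_0'$ in Young-conjugate position, so one interpolates: write $f_1*f_2$ by splitting the $y$-integration into the region $\{|y|\le |x|/2\}$, where $\langle x\rangle\asymp\langle x-y\rangle$, and its complement, where $\langle x\rangle\lesssim\langle y\rangle$, so that in each region one weight can be absorbed. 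On each piece one applies Hölder in the ``excess'' exponent $\masfR(\mathrm{p})$ together with the weight $\langle\cdot\rangle^{-d\,\masfR(\mathrm{p})}$, which is in $L^{1/\masfR(\mathrm{p})}(\mathbb{R}^d)$ precisely when $d\,\masfR(\mathrm{p})\cdot\frac{1}{\masfR(\mathrm{p})}>d$, i.e.\ strictly; this is the source of the strictness caveat attached to \eqref{lastineq2B}. Tracking the weights through Hölder and the resulting Young inequality yields the condition \eqref{lastineq2B}, while \eqref{lastineq2A} is what is needed to make the two-region splitting legitimate for each pair of indices. This is precisely \cite[Theorem ...]{TJPT2014}, so in practice one cites that theorem for Part 1) and only sketches the splitting.

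For Part 2), fix a window $\phi\in\mathcal{S}(\mathbb{R}^d)\setminus 0$ with $\phi=\phi_1*\phi_2$ for suitable $\phi_j\in\mathcal{S}(\mathbb{R}^d)\setminus 0$ (e.g.\ Gaussians, whose convolution is again a Gaussian up to normalization). The key pointwise identity is the convolution relation for the STFT,
\begin{equation*}
|V_\phi(f_1*f_2)(x,\omega)|\le \bigl(|V_{\phi_1}f_1(\cdot,\omega)|*|V_{\phi_2}f_2(\cdot,\omega)|\bigr)(x),
\end{equation*}
which holds because $V_\phi(f_1*f_2)(x,\omega)=e^{-2\pi i x\omega}\bigl(V_{\phi_1}f_1(\cdot,\omega)*V_{\phi_2}f_2(\cdot,\omega)\bigr)(x)$ up to a harmless unimodular factor. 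Now apply Part 1) in the $x$-variable, for each fixed $\omega$, with exponents $p_1,p_2,p_0$ and the position weights $t_1,t_2,t_0$: this is licit because the hypotheses on $\mathrm{p}$ and on the $t_j$ are exactly those of Part 1). This produces
\begin{equation*}
\|V_\phi(f_1*f_2)(\cdot,\omega)\langle\cdot\rangle^{-t_0}\|_{L^{p_0'}}\lesssim \|V_{\phi_1}f_1(\cdot,\omega)\langle\cdot\rangle^{t_1}\|_{L^{p_1}}\,\|V_{\phi_2}f_2(\cdot,\omega)\langle\cdot\rangle^{t_2}\|_{L^{p_2}}.
\end{equation*}
It remains to take the $L^{q_0'}_{-s_0}$ norm in $\omega$ of the left side and bound it by the product of the $L^{q_j}_{s_j}$ norms in $\omega$ of the two factors on the right. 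Since $\masfR(\mathrm{q})\le 1$, the exponents $q_1,q_2,q_0'$ satisfy $\frac{1}{q_1}+\frac{1}{q_2}\ge \frac{1}{q_0'}$, and \eqref{lastineq2C} $s_0+s_1+s_2\ge 0$ together with the elementary inequality $\langle\omega\rangle^{-s_0}\lesssim\langle\omega-\eta\rangle^{s_1}\langle\eta\rangle^{s_2}$ (valid here because we only need the zero-excess, i.e.\ Young-endpoint, form in the frequency variable up to the slack $\masfR(\mathrm{q})\le 1$) lets us conclude by one more application of the weighted Young inequality in $\omega$ — concretely, of Part 1) again, now with a trivial (no-excess) configuration, or directly of the classical weighted Young inequality. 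Recognizing the resulting quantities as $\|f_1*f_2\|_{M^{p_0',q_0'}_{-s_0,-t_0}}\lesssim\|f_1\|_{M^{p_1,q_1}_{s_1,t_1}}\|f_2\|_{M^{p_2,q_2}_{s_2,t_2}}$ and invoking density of $C_0^\infty$ in the relevant modulation spaces (for $p_j,q_j<\infty$; the remaining cases follow by the duality relations of Lemma \ref{dualnost} and a weak-$*$ argument) completes the proof.

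The main obstacle is genuinely Part 1): the interplay between the ``excess'' integrability $\masfR(\mathrm{p})$, the admissible range of the position weights in \eqref{lastineq2B}, and the strictness that must be imposed exactly at the boundary $t_j=d\,\masfR(\mathrm{p})$ requires the careful region-splitting and Hölder bookkeeping described above. Part 2) is then a formal transfer via the STFT convolution identity, with the only subtleties being the correct pairing of the frequency exponents under $\masfR(\mathrm{q})\le 1$ and the handling of the non-reflexive endpoints through Lemma \ref{dualnost}.
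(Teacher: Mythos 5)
The paper itself offers no proof of Theorem \ref{mainconvolution}: it simply cites \cite{TJPT2014} and records that the argument there rests on a detailed study of an auxiliary trilinear map over carefully chosen regions of $\mathbb{R}^d$. Your ultimate fallback of citing that reference for Part 1) therefore coincides with what the paper does, and your reduction of Part 2) to Part 1) via the identity $|V_{\phi_1*\phi_2}(f_1*f_2)(x,\omega)|\le\bigl(|V_{\phi_1}f_1(\cdot,\omega)|*|V_{\phi_2}f_2(\cdot,\omega)|\bigr)(x)$ is indeed the standard transfer used in that line of work. So the architecture is right; the issues are in the details you supply around the citation.

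Two of those details, as written, would fail. First, in the frequency variable the two STFT factors are evaluated at the \emph{same} $\omega$ --- your own displayed identity shows the convolution acts only in $x$ --- so the $\omega$-step is a pointwise product, not a convolution. The correct tool is weighted H\"older: enlarge $q_1,q_2$ to $\tilde q_1,\tilde q_2$ with $1/\tilde q_1+1/\tilde q_2=1/q_0'$ (possible since $\masfR(\mathrm{q})\le 1$, using the inclusions $M^{p,q}\subset M^{p,\tilde q}$ for $q\le\tilde q$, which hold for modulation spaces though not for Lebesgue spaces), and use the diagonal bound $\langle\omega\rangle^{-s_0}\le\langle\omega\rangle^{s_1+s_2}$, which is exactly \eqref{lastineq2C}. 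The convolution-type inequality $\langle\omega\rangle^{-s_0}\lesssim\langle\omega-\eta\rangle^{s_1}\langle\eta\rangle^{s_2}$ that you invoke is both irrelevant here and false under \eqref{lastineq2C} alone (take $s_0=-2$, $s_1=s_2=1$ and let $\omega\to\infty$ with $\eta$ fixed). Second, in your sketch of Part 1) the integrability criterion you state for $\langle\cdot\rangle^{-d\,\masfR(\mathrm{p})}\in L^{1/\masfR(\mathrm{p})}(\mathbb{R}^d)$ reads $d\,\masfR(\mathrm{p})\cdot\frac{1}{\masfR(\mathrm{p})}>d$, i.e.\ $d>d$, which never holds: that weight sits precisely at the non-integrable critical exponent, so the single H\"older application you describe cannot close, and the strictness caveat in \eqref{lastineq2B} cannot be extracted this way; the genuinely multi-region case analysis of \cite{TJPT2014} is needed. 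Since you defer to that reference in the end, the statement is covered, but the surrounding sketch should not be presented as a proof.
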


For the proof we refer to \cite{TJPT2014}.
It is based on the detailed study of an auxiliary three-linear map
over carefully chosen regions in $ \mathbb{R}^d $ (see Subsections 3.1 and 3.2 in \cite{TJPT2014}).
This result extends multiplication and convolution properties obtained in \cite{PTT2}.
Moreover, the sufficient conditions from Theorem \ref{mainconvolution} are also necessary in the following sense.

\begin{thm}\label{otpmimality1}
Let $p_j,q_j\in [1,\infty ]$ and $s_j,t_j\in \mathbb R$, $j=0,1,2$. Assume that at
least one of the following statements hold true:
\begin{enumerate}
\item[{\rm{1)}}] the map $(f_1,f_2)\mapsto f_1*f_2$ on $C_0^\infty (\mathbb{R}^d)$ is continuously
extendable to a map from $L^{p_1}_{t_1}(\mathbb{R}^d)\times
L^{p_2}_{t_2}(\mathbb{R}^d)$ to $L^{p_0'}_{-t_0}(\mathbb{R}^d)$;

\vrum

\item[{\rm{2)}}] the map $(f_1,f_2)\mapsto f_1*f_2$ on $C_0^\infty (\mathbb{R}^d)$ is continuously
extendable to a map from $M^{p_1,q_1}_{s_1,t_1}(\mathbb{R}^d)\times
M^{p_2,q_2}_{s_2,t_2}(\mathbb{R}^d)$ to $M^{p_0',q_0'}
_{-s_0,-t_0}(\mathbb{R}^d)$;

\end{enumerate}
Then \eqref{lastineq2A} and \eqref{lastineq2B} hold true.
\end{thm}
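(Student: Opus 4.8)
The plan is to prove the necessity of the conditions \eqref{lastineq2A} and \eqref{lastineq2B} by a testing argument, inserting suitably chosen families of functions into the (assumed bounded) convolution map and extracting the inequalities from the requirement that the resulting norm bounds hold uniformly. First I would reduce statement 2) to statement 1): since $C_0^\infty(\mathbb{R}^d) \subseteq M^{p,q}_{s,t}(\mathbb{R}^d)$ densely in an appropriate sense and the modulation space norms of nicely localized bumps are comparable (up to constants depending on the dilation) to the corresponding weighted $L^p$ norms of their Fourier-side data, a bounded extension on modulation spaces forces, after testing against Gaussians or compactly supported bumps, the same scaling relations that the weighted $L^p$ statement does. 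Concretely, one uses that for a fixed Schwartz window $\phi$ and a Gaussian-type function $g_\lambda$, the quantity $\|g_\lambda\|_{M^{p,q}_{s,t}}$ has a known asymptotic behaviour in the dilation parameter $\lambda$, so the two statements carry the same homogeneity information; hence it suffices to treat case 1).

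For case 1), the two inequalities are extracted from two different scaling regimes. To obtain \eqref{lastineq2B}, the inequality $0 \le t_0 + t_1 + t_2 - d\cdot \masfR(\mathrm p)$, I would test with dilated bumps: take $f_j(x) = \varphi(x/R)$ for a fixed $\varphi \in C_0^\infty$ and let $R \to \infty$. Then $\|f_j\|_{L^{p_j}_{t_j}} \asymp R^{t_j + d/p_j}$ while $f_1 * f_2$ is again essentially a bump at scale $R$ with height $\asymp R^d$, so $\|f_1*f_2\|_{L^{p_0'}_{-t_0}} \asymp R^{d - t_0 + d/p_0'}$; comparing exponents and using $1/p_0' = 1 - 1/p_0$ together with the definition of $\masfR(\mathrm p)$ gives exactly \eqref{lastineq2B} in the limit $R\to\infty$ (and the $R\to 0$ regime gives nothing new or the reverse, so one keeps the binding direction). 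To obtain \eqref{lastineq2A}, the pairwise inequalities $0 \le t_j + t_k$, I would instead use translated bumps: fix two bumps and translate one of them far away, i.e. take $f_1$ fixed and $f_2 = T_y \psi$ with $|y|\to\infty$ (and symmetrically, and also the version that pushes mass to infinity to read off $t_0+t_j$). The weight $\langle x\rangle^{t_2}$ contributes $\asymp \langle y\rangle^{t_2}$ to $\|f_2\|_{L^{p_2}_{t_2}}$, while $f_1*f_2$ is a bump near $y$ whose weighted norm on the output side scales like $\langle y\rangle^{-t_0}$; boundedness uniformly in $y$ forces $-t_0 \le t_2$ up to the contribution of the remaining fixed factor, and running through the three pairs (with the roles of "fixed", "translated", and "output" permuted, always using $\langle \cdot \rangle^{-t_0}$ on the target) yields all three instances of \eqref{lastineq2A}.

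The main obstacle I expect is bookkeeping the constants and the precise asymptotics so that the limiting inequalities come out non-strict and in the correct direction — in particular making sure that in the translation argument the convolution $f_1 * T_y f_2$ really does have weighted norm of the claimed order (one must control both the location of its support/bulk and the fact that it does not decay in the relevant window), and that in the dilation argument the weighted norms behave like pure powers of $R$ rather than picking up logarithmic corrections. A secondary subtlety is that the hypotheses only assume extendability of the map initially defined on $C_0^\infty$, so all test functions must be taken in $C_0^\infty$ (or the statements first reduced to that case via density), which rules out literal Gaussians and forces the use of compactly supported approximants; this is routine but must be done carefully in the modulation-space reduction. For the detailed execution, including the choice of test functions and the verification of these asymptotics, I would refer to \cite{TJPT2014} and \cite{PTT2}, where the corresponding sharpness statements are established.
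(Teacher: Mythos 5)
The paper does not actually prove this theorem: it is quoted verbatim from \cite{TJPT2014} (the text preceding it only says that the sufficient conditions of Theorem \ref{mainconvolution} "are also necessary"), so there is no in-paper argument to compare yours against. That said, your plan is the standard — and correct — route to such sharpness statements, and it is essentially how the necessity is established in the cited reference: dilated bumps $\varphi(\cdot/R)$ with $R\to\infty$ give \eqref{lastineq2B} (your exponent count $2d-t_0-d/p_0\le t_1+t_2+d/p_1+d/p_2$ is exactly $0\le t_0+t_1+t_2-d\,\masfR(\mathrm p)$), and translations give the three pairwise conditions in \eqref{lastineq2A}, with the pair $t_1+t_2\ge 0$ obtained by translating the two inputs in opposite directions so that the output sits at the origin. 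Two points deserve care if you execute this. First, "reducing 2) to 1)" is not literally possible (boundedness on modulation spaces does not imply boundedness on weighted Lebesgue spaces); what you really mean, and what works, is rerunning the same test functions with the asymptotics $\|T_y\varphi\|_{M^{p,q}_{s,t}}\asymp\langle y\rangle^{t}$ and $\|\varphi(\cdot/R)\|_{M^{p,q}_{s,t}}\asymp R^{t+d/p}$ in place of the Lebesgue ones — the conclusions involve only the $t_j$ and $p_j$, so the same inequalities drop out. Second, the testing argument needs \emph{lower} bounds on the norm of $f_1*f_2$ in the target space (easy for $L^{p_0'}_{-t_0}$ since $\varphi*\varphi\ge 0$ for $\varphi\ge 0$; slightly more work for $M^{p_0',q_0'}_{-s_0,-t_0}$, where one bounds the STFT from below on a region of the appropriate size). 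A small inaccuracy: the regime $R\to 0$ is not vacuous — it yields the additional necessary condition $\masfR(\mathrm p)\ge 0$ — but that is not part of the stated conclusion, so it does not affect your proof.
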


\section{Multilinear localization operators} \label{sec02}

In this section we introduce multilinear localization operators in Definition \ref{locopdef}
and show that they can be interpreted as particular Weyl pseudodifferential operators, Theorem \ref{Weyl connection lemma}.
We also introduce multilinear Weyl pseudodifferential operators and prove their connection to the multilinear Wigner transform in
Lemma \ref{L-and-W}. This is done in the context of the duality between $ {\mathcal S}^{( 1)} (\mathbb{R}^d)$ and
$ {\mathcal S}^{( 1)'} (\mathbb{R}^d)$, and carried out verbatim to the duality between $ {\mathcal S} (\mathbb{R}^d)$ and
$ {\mathcal S}' (\mathbb{R}^d)$ in the next Section.

\par

The localization operator $A_a ^{\varphi_1, \varphi_2} $ with the symbol
$a  \in   L^2 (\mathbb{R}^{2d} )$ and with windows $\varphi _1, \varphi _2 \in  L^2 (\mathbb{R}^d ) $
can be defined in terms  of the short-time Fourier transform \eqref{STFT!} as follows:
$$
A_a ^{\varphi_1, \varphi_2}  f(t)=\int_{\mathbb{R}^{2d} } a (x,\omega ) V_{\varphi _1} f (x,\omega ) M_\omega T_x \varphi _2 (t)
\, dx d\omega, \;\;\; f \in   L^2 (\mathbb{R}^d ).
$$

\par

To define multilinear localization operators we slightly abuse the notation (as it is done in e.g. \cite{MOPf}) so that
$ \vec{f} $  will denote both the vector $ \vec{f} = (f_1,f_2, \dots, f_n) $ and the tensor product
$ \vec{f} = f_1 \otimes f_2 \otimes \dots \otimes f_n $. This will not cause confusion, since
the meaning of $ \vec{f} $  will be clear from the context.

For example, if $t=(t_1, t_2, \dots, t_n) $,  and $F_j = F_j (t_j),$ $t_j \in \mathbb{R}^d $,   $ j = 1,2,\dots,n$,
then
\begin{equation} \label{product}
\prod_{j=1} ^{n} F_j (t_j)= F_1 (t_1) \cdot  F_2 (t_2) \cdot \dots \cdot F_n (t_n) = F_1 (t_1) \otimes  F_2 (t_2) \otimes \dots \otimes F_n (t_n) = \vec{F} (t).
\end{equation}

\begin{defn} \label{locopdef}
Let $f_j \in  {\mathcal S}^{( 1)} (\mathbb{R}^d)$, $j=1,2,\dots,n$,
and $ \vec{f} = (f_1,f_2, \dots, f_n) $.
The {\em multilinear localization operator} $\aaf $ with {\em symbol}
$a  \in  {\mathcal S} ^{(1)'} (\mathbb{R}^{2nd} )$ and {\em window}
$$\varphi = (\vec{\varphi}, \vec{\phi})
= (\varphi _1, \varphi _2, \dots, \varphi _n, \phi_1, \phi_2, \dots, \phi _n), \;\;\; \varphi _j, \phi_j \in {\mathcal S} ^{(1)} (\mathbb{R}^d ), \;\;\; j = 1,2,\dots,n,
$$
is given by
\begin{equation} \label{bilinearoperator}
\aaf \vec{f} (t) =\int_{\mathbb{R}^{2nd} } a (x,\omega )
 \prod_{j=1} ^{n}\left ( V_{\varphi _j} f_j (x_j,\omega_j )M_{\omega_j} T_{x_j} \phi_j (t_j) \right )
\, dx d\omega,
\end{equation}
where
$x_j, \omega_j , t_j  \in \mathbb{R}^d, $ $ j=1,2,\dots, n,$ and
$ x = (x_1,x_2, \dots, x_n), $ $ \omega = (\omega_1 ,\omega_2 \dots, \omega_n ), $ $t = (t_1,t_2 \dots, t_n) $.
\end{defn}

\par

\begin{rem}
\label{difference} When $ n = 2 $ in Definition \ref{locopdef} we obtain the bilinear localization operators studied in \cite{Teof2018}. (There is a typo in
\cite[Definition 1]{Teof2018}; the integration in (9) should be taken over $ \mathbb{R}^{4d}$.)

Let $ \mathcal{R}$ denote the trace mapping that assigns to each function $F$
defined on  $ \mathbb{R}^{nd}$ a function defined on $ \mathbb{R}^{d}$ by the formula
$$
 \mathcal{R}: F \mapsto F \left | _{{t_1=t_2=\dots = t_n}} , \;\;\; t_j \in \mathbb{R}^{d}, \; j=1,2,\dots, n. \right .
$$
Then $  \mathcal{R} \aaf $ is the multilinear operator given in \cite[Definition 2.2]{CKasso}.
\end{rem}

\par

By \eqref{STFTtensor-n} it follows that the weak definition of \eqref{bilinearoperator} is given by
\begin{equation} \label{weakblo}
\langle \aaf \vec{f}, \vec{g} \rangle  =
\langle  a V_{\vec{\varphi}} \vec{f} ,
V_{\vec{\phi}} \vec{g}   \rangle
 =
 \langle a, \overline{V_{\vec{\varphi}} \vec{f}} \, V_{\vec{\phi}} \vec{g}  \rangle,
\end{equation}
and $ f_j,g_j,\in  {\mathcal S} ^{(1)} (\mathbb{R}^d )$, $ j=1,2,\dots, n$.
The brackets can be interpreted as  duality between a suitable pair of dual spaces. Thus
$ \aaf $ is  well-defined continuous operator from
$ {\mathcal S} ^{(1)} (\mathbb{R}^{nd} )$ to $({\mathcal S} ^{(1)})' (\mathbb{R}^{2nd} ). $

\par

Next we introduce a class  of multilinear
Weyl pseudodifferential operators ($\Psi$DO for short) and use the  Wigner transform to prove
appropriate interpetation of multilinear localization operators as  multilinear
Weyl pseudodifferential operators, Theorem \ref{Weyl connection lemma}.

Recall that in \cite{CKasso} multilinear localization operators are introduced in connetion to Kohn-Nirenberg $\Psi$DOs instead.

\par

By analogy with the bilinear Weyl pseudodifferential operators given in \cite{Teof2018} we define
the multilinear Weyl pseudodifferential operator  as follows:
\begin{equation} \label{oscilatory-2nd}
L_\sigma( \vec{f} ) (x) = \int_{\mathbb{R}^{2nd}} \sigma (\frac{x+y}{2}, \omega) \vec{f}(y)
e^{2\pi i \mathcal{I}(x-y)\cdot \omega} dy d\omega, \;\;\;
x  \in \mathbb{R}^{nd},
\end{equation}
where $\sigma \in {\mathcal S}^{( 1)'} (\mathbb{R}^{2nd})$,
$ \vec{f} (y) = \prod_{j=1} ^n f_j (y_j)$,
$f_j \in {\mathcal S}^{( 1)} (\mathbb{R}^{d})$, $ j=1,2,\dots, n$.
Here $\mathcal{I}$ denotes the identity matrix in $nd$, that is
$\displaystyle \mathcal{I}(x-y)\cdot \omega = \sum_{j=1} ^n  (x_j - y_j)  \omega_j$.)

\par

Similarly, the  bilinear Wigner transform from \cite{Teof2018} extends to
\begin{multline} \label{cross-Wigner-4d}
W(\vec{f},\vec{g})(x,\omega)  =
\int_{\mathbb{R}^{nd}} \prod_{j=1} ^n \left ( f_j
(x_j +\frac{t_j}2) \overline{g_j(x_j-\frac{t_j}2)} \right )
e^{-2\pi   i \mathcal{I} \omega t}\,dt,
\end{multline}
where $ f_j, g_j \in {\mathcal S}^{( 1)} (\mathbb{R}^{d})$, $x_j, \omega_j , t_j  \in \mathbb{R}^d, $ $ j=1,2,\dots, n,$ and
$ x = (x_1,x_2, \dots, x_n), $ $ \omega = (\omega_1 ,\omega_2 \dots, \omega_n ), $ $t = (t_1,t_2 \dots, t_n) $.

It is easy to see that $ W(\vec{f},\vec{g}) \in  {\mathcal S}^{( 1)} (\mathbb{R}^{2nd})$, when $\vec{f},\vec{g} \in
{\mathcal S}^{( 1)} (\mathbb{R}^{nd})$.

\begin{lemma} \label{L-and-W}
Let $\sigma \in {\mathcal S}^{( 1)} (\mathbb{R}^{2nd})$ and  $ f_j, g_j \in {\mathcal S}^{( 1)} (\mathbb{R}^{d})$, $ j=1,2,\dots, n$.
Then $L_\sigma $ given by \eqref{oscilatory-2nd} extends to a continuous map from ${\mathcal S}^{( 1)} (\mathbb{R}^{nd})$ to ${\mathcal S}^{( 1)'} (\mathbb{R}^{2nd})$ and the following formula holds:
$$
\langle L_\sigma \vec{f} , \vec{g} \rangle = \langle \sigma, W(\vec{g},\vec{f}) \rangle.
$$
\end{lemma}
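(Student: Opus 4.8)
The plan is to prove the identity $\langle L_\sigma \vec f,\vec g\rangle=\langle\sigma,W(\vec g,\vec f)\rangle$ first for $\sigma,f_j,g_j$ in the Gelfand--Shilov space $\mathcal S^{(1)}$, where every integral in sight converges absolutely and Fubini applies, and then to extend to $\sigma\in\mathcal S^{(1)'}$ by duality. The point of doing the computation in $\mathcal S^{(1)}$ is precisely that, as noted in the excerpt, $W(\vec f,\vec g)\in\mathcal S^{(1)}(\mathbb R^{2nd})$ whenever $\vec f,\vec g\in\mathcal S^{(1)}(\mathbb R^{nd})$, so the right-hand side is a well-defined pairing and the left-hand side is an honest integral of Schwartz-type functions.

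First I would write out $\langle L_\sigma\vec f,\vec g\rangle$ using the oscillatory-integral definition \eqref{oscilatory-2nd}:
$$
\langle L_\sigma\vec f,\vec g\rangle=\int_{\mathbb R^{nd}}\!\!\int_{\mathbb R^{2nd}}\sigma\!\left(\tfrac{x+y}2,\omega\right)\vec f(y)\,\overline{\vec g(x)}\,e^{2\pi i\,\mathcal I(x-y)\cdot\omega}\,dy\,d\omega\,dx.
$$
Then I would perform the standard change of variables in the $x,y$ integration: set $u=\tfrac{x+y}2$ (the "center" variable) and $t=x-y$ (the "difference" variable), so that $x=u+\tfrac t2$, $y=u-\tfrac t2$, with Jacobian $1$ (in each of the $n$ blocks of $\mathbb R^d$, hence overall). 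Under this substitution $\vec f(y)=\prod_j f_j(u_j-\tfrac{t_j}2)$ and $\overline{\vec g(x)}=\prod_j\overline{g_j(u_j+\tfrac{t_j}2)}$, and the phase becomes $e^{2\pi i\,\mathcal I t\cdot\omega}$. The integral thus factors as
$$
\int_{\mathbb R^{2nd}}\sigma(u,\omega)\left(\int_{\mathbb R^{nd}}\prod_{j=1}^n g_j(u_j+\tfrac{t_j}2)\overline{f_j(u_j-\tfrac{t_j}2)}\,e^{-2\pi i\,\mathcal I\omega t}\,dt\right)^{\!\!-}\!\!\cdots
$$
wait — more carefully: pulling the complex conjugate outside the inner $t$-integral and matching signs, the inner integral is exactly $\overline{W(\vec g,\vec f)(u,\omega)}$ by the definition \eqref{cross-Wigner-4d} (note the roles of $\vec g$ and $\vec f$ are as stated, with $g_j$ carrying the $+t_j/2$ argument and $f_j$ the $-t_j/2$ argument, matching \eqref{cross-Wigner-4d} with $(f,g)$ there replaced by $(g,f)$). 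Hence $\langle L_\sigma\vec f,\vec g\rangle=\int\sigma(u,\omega)\,\overline{W(\vec g,\vec f)(u,\omega)}\,du\,d\omega=\langle\sigma,W(\vec g,\vec f)\rangle$, using the convention $\langle F,G\rangle=\int F\overline G$ fixed in the Notation paragraph. Justifying Fubini for the interchange that brings the $t$-integral inside is immediate here because $\sigma\in\mathcal S^{(1)}$ decays faster than any exponential and the $f_j,g_j$ likewise, so the triple integrand is absolutely integrable.

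For the extension to $\sigma\in\mathcal S^{(1)'}(\mathbb R^{2nd})$ I would argue by density/duality: the map $\sigma\mapsto L_\sigma$ is defined on $\mathcal S^{(1)'}$ as stated, and for fixed $\vec f,\vec g\in\mathcal S^{(1)}$ both sides of the claimed identity are continuous linear functionals of $\sigma\in\mathcal S^{(1)'}$ — the right-hand side because $W(\vec g,\vec f)\in\mathcal S^{(1)}$, the left-hand side because $L_\sigma\vec f\in\mathcal S^{(1)'}$ depends continuously on $\sigma$ and pairs continuously with $\vec g\in\mathcal S^{(1)}$. Since they agree on the dense subspace $\mathcal S^{(1)}\subset\mathcal S^{(1)'}$, they agree everywhere; this simultaneously yields the asserted continuity of $L_\sigma:\mathcal S^{(1)}(\mathbb R^{nd})\to\mathcal S^{(1)'}(\mathbb R^{2nd})$, since $\langle L_\sigma\vec f,\vec g\rangle=\langle\sigma,W(\vec g,\vec f)\rangle$ exhibits it as a composition of the continuous maps $(\vec f,\vec g)\mapsto W(\vec g,\vec f)$ and $\sigma$. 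The only mildly delicate point — the "main obstacle," though it is more bookkeeping than difficulty — is keeping the $n$-fold tensor structure and the identity matrix $\mathcal I$ straight through the change of variables, and being careful about which argument ($\vec f$ or $\vec g$, $+t/2$ or $-t/2$) lands where so that the final Wigner transform comes out as $W(\vec g,\vec f)$ and not $W(\vec f,\vec g)$ or a conjugate thereof; tracking the complex conjugates against the sesquilinear pairing convention is where sign errors would creep in.
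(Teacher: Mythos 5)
Your proposal is correct and follows essentially the same route as the paper's proof: a direct computation via the center/difference change of variables $u=\tfrac{x+y}{2}$, $t=x-y$ (the paper runs the identical substitution in the reverse direction, starting from $\langle\sigma,W(\vec g,\vec f)\rangle$ and using $W(\vec g,\vec f)=\overline{W(\vec f,\vec g)}$), with Fubini justified by the Gelfand--Shilov decay and the extension to $\sigma\in{\mathcal S}^{(1)'}$ obtained exactly as you do, from $W(\vec f,\vec g)\in{\mathcal S}^{(1)}(\mathbb{R}^{2nd})$. No gaps.
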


\begin{proof} The proof follows by the straightforward calculation:
\begin{multline*}
\langle \sigma, W(\vec{g},\vec{f}) \rangle =   \int_{\mathbb{R}^{2nd}}  \sigma (x,\omega) W(\vec{f},\vec{g})(x,\omega) dx d\omega
\\
= \int_{\mathbb{R}^{3nd}} \sigma (x,\omega)
 \prod_{j=1} ^n \left ( f_j
(x_j +\frac{t_j}2) \overline{g_j(x_j-\frac{t_j}2)} \right )
e^{-2\pi   i \mathcal{I} \omega t}
dt  dx d\omega \\
= \int_{\mathbb{R}^{6d}} \sigma (\frac{u+v}{2},\omega)
 \prod_{j=1} ^n \left ( f_j
(v_j) \overline{g_j(u_j)} \right )
e^{-2\pi   i \mathcal{I}(u-v) \omega}
du dv d\omega \\
= \langle \sigma (\frac{u+v}{2},\omega)
\vec{f}(v) e^{2\pi   i \mathcal{I}(u-v) \omega},
\vec{g}(u)
\rangle
=
\langle L_\sigma \vec{f} , \vec{g} \rangle,
\end{multline*}
where we used $W(\vec{g},\vec{f}) = \overline{W(\vec{f},\vec{g})}$ and the change of variables $ u = x+ \frac{t}2,$
$ v= x - \frac{t}2.$ This extends to each $\sigma \in {\mathcal S}^{( 1)'} (\mathbb{R}^{2nd})$,
since  $ W(\vec{f},\vec{g}) \in  {\mathcal S}^{( 1)} (\mathbb{R}^{2nd})$ when
$ f_j, g_j \in {\mathcal S}^{( 1)} (\mathbb{R}^{d})$, $ j=1,2,\dots, n$.

\par
\end{proof}

The so called Weyl connection between the set of
linear localization operators and Weyl $\Psi$DOs  is   well known, we refer to
e.g. \cite{folland89, BCG02, Teof2016}.
The corresponding  Weyl connection in bilinear case
is established in \cite[Theorem 4]{Teof2018}. The proof is quite technical and based on the kernel theorem for Gelfand-Shilov spaces (see e.g. \cite{Prang, TKNN2012, Teof2015}) and direct calculations.
Since the proof of the following Theorem \ref{Weyl connection lemma}  is its straightforward extension, here we only sketch the main ideas.
The conclusion of Theorem \ref{Weyl connection lemma} is that any multilinear
localization operator can be viewed as a particular multilinear Weyl  $\Psi$DOs, as expected.

\par

\begin{thm} \label{Weyl connection lemma}
Let there be given $ a \in  {\mathcal S}^{( 1)'} (\mathbb{R}^{2d})$ and
let $\varphi = (\vec{\varphi}, \vec{\phi}),$
$\vec{\varphi} = (\varphi _1, \varphi _2, \dots, \varphi _n),$
$\vec{\phi} =  ( \phi_1, \phi_2, \dots, \phi _n),$
$\varphi _j, \phi_j \in {\mathcal S} ^{(1)} (\mathbb{R}^d ),$ $ j = 1,2,\dots,n.$
Then the localization operator $\aaf $ is the Weyl pseudodifferential operator with the Weyl symbol
$$ \sigma = a\ast W(\vec{\phi},\vec{\varphi}) = a\ast (\prod_{j=1} ^n  W(\phi_j,\varphi_j)). $$
Therefore, if $ \vec{f} = (f_1,f_2, \dots, f_n),$
$  \vec{g} = (g_1,g_2, \dots, g_n),$
$ f_j, g_j,  \in {\mathcal S}^{( 1)'} (\mathbb{R}^{d}),$   $ j=1,2,\dots, n$,
then
$$
\langle \aaf \vec{f},  \vec{g} \rangle
= \langle L_{a\ast  W(\vec{\phi},\vec{\varphi})} \vec{f},  \vec{g} \rangle.
$$
\end{thm}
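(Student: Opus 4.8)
The plan is to reduce the statement to the weak formulations already available: equation \eqref{weakblo} for the localization operator $\aaf$ and Lemma \ref{L-and-W} for the Weyl operator $L_\sigma$. Comparing these, it suffices to prove the distributional identity
$$
\langle a, \overline{V_{\vec{\varphi}} \vec{f}} \, V_{\vec{\phi}} \vec{g} \rangle
= \langle a \ast W(\vec{\phi},\vec{\varphi}), W(\vec{g},\vec{f}) \rangle,
\qquad f_j, g_j \in {\mathcal S}^{(1)} (\mathbb{R}^d),
$$
and then extend from $a \in {\mathcal S}^{(1)}(\mathbb{R}^{2nd})$ to $a \in {\mathcal S}^{(1)'}(\mathbb{R}^{2nd})$ by density and continuity, using the mapping properties recorded before the statement (and the kernel theorem, Theorem \ref{kernelteorema}, to justify that all the bracket pairings make sense and that the convolution $a \ast W(\vec{\phi},\vec{\varphi})$ is a well-defined element of ${\mathcal S}^{(1)'}(\mathbb{R}^{2nd})$, since $W(\vec{\phi},\vec{\varphi}) \in {\mathcal S}^{(1)}(\mathbb{R}^{2nd})$).

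First I would treat $a \in {\mathcal S}^{(1)}(\mathbb{R}^{2nd})$ so that all integrals converge absolutely and Fubini applies. Moving $a \ast W(\vec{\phi},\vec{\varphi})$ to the other side via the standard adjoint relation for convolution against test functions, the right-hand side becomes $\langle a, W(\vec{g},\vec{f}) \ast \widetilde{W(\vec{\phi},\vec{\varphi})} \rangle$ (with $\widetilde{h}(\cdot) = h(-\cdot)$), so the whole theorem comes down to the pointwise phase-space identity
$$
\overline{V_{\vec{\varphi}} \vec{f}(x,\omega)} \, V_{\vec{\phi}} \vec{g}(x,\omega)
= \bigl( W(\vec{g},\vec{f}) \ast \widetilde{W(\vec{\phi},\vec{\varphi})} \bigr)(x,\omega).
$$
In the linear (and bilinear) case this is a classical fact — the product of two STFTs with a shared symbol variable is a convolution of two cross-Wigner distributions — and the multilinear version factors completely: both sides split as $\prod_{j=1}^n$ of the corresponding one-dimensional identity because $V_{\vec{\varphi}} \vec{f} = \prod_j V_{\varphi_j} f_j$ (by \eqref{STFTtensor-n} with the tensor interpretation of $\vec{f}$), $W(\vec{g},\vec{f}) = \prod_j W(g_j,f_j)$ by \eqref{cross-Wigner-4d}, and convolution of tensor products is the tensor product of convolutions. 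So I would prove the scalar identity $\overline{V_{\varphi} f}\, V_{\phi} g = W(g,f) \ast \widetilde{W(\phi,\varphi)}$ by a direct change of variables on the defining integrals \eqref{STFT!}, \eqref{cross-Wigner}, then take the $n$-fold tensor product, then re-assemble the bracket.

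The extension to $a \in {\mathcal S}^{(1)'}(\mathbb{R}^{2nd})$ is then automatic: both sides of the displayed identity are continuous in $a$ for the weak-$\ast$ topology (the left side because $\overline{V_{\vec{\varphi}} \vec{f}} \, V_{\vec{\phi}} \vec{g} \in {\mathcal S}^{(1)}(\mathbb{R}^{2nd})$, the right side because convolution with the fixed test function $\widetilde{W(\vec{\phi},\vec{\varphi})} \in {\mathcal S}^{(1)}(\mathbb{R}^{2nd})$ maps ${\mathcal S}^{(1)}$ into itself continuously), and ${\mathcal S}^{(1)}$ is dense in ${\mathcal S}^{(1)'}$. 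The main obstacle is bookkeeping rather than anything conceptual: one must be careful that the variable grouping in \eqref{oscilatory-2nd}–\eqref{cross-Wigner-4d} (the block-diagonal phase $\mathcal{I}(x-y)\cdot\omega = \sum_j (x_j-y_j)\omega_j$) is exactly what makes the $n$-fold factorization legitimate, and that the identification of $\vec f$ as a tensor product (via the kernel theorem) is used consistently so that $\langle \aaf \vec f, \vec g\rangle$, a priori a pairing on $\mathbb{R}^{nd}$, matches the pairing on $\mathbb{R}^{2nd}$ appearing in Lemma \ref{L-and-W}. Since, as the authors note, this is a verbatim extension of \cite[Theorem 4]{Teof2018} from $n=2$ to general $n$, I would present the scalar identity and the tensorization step, and leave the convergence/density details to the reader with a pointer to \cite{Teof2018}.
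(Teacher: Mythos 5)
Your proposal is correct in substance, but it follows a genuinely different route from the paper's. The paper never passes through the pointwise phase--space identity you propose: instead it extracts from \eqref{weakblo} an explicit kernel representation $\langle \aaf \vec f,\vec g\rangle=\langle k,\prod_j\overline{f_j}\otimes\prod_j g_j\rangle$ with $k(t,s)$ given by \eqref{kernel}, separately expands $\langle L_{a\ast W(\vec\phi,\vec\varphi)}\vec f,\vec g\rangle$ via Lemma \ref{L-and-W}, the commutation relation $T_xM_\omega=e^{-2\pi i x\cdot\omega}M_\omega T_x$ and the covariance of the Wigner transform (see \eqref{convolution!}), and then performs a change of variables to exhibit the \emph{same} kernel $k$, concluding by the uniqueness part of the kernel theorem, Theorem \ref{kernelteorema}. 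Your argument instead reduces everything to the classical ``magic formula'' expressing a product of STFTs as a convolution of cross-Wigner distributions, tensorizes it over the $n$ factors, and extends to $a\in{\mathcal S}^{(1)'}(\mathbb{R}^{2nd})$ by weak-$\ast$ density. Both are legitimate: yours is the standard proof of the linear Weyl connection (cf.\ \cite{folland89,BCG02}) and avoids the kernel computation altogether, while the paper's kernel route is what carries over verbatim from \cite[Theorem 4]{Teof2018} and is the reason Theorem \ref{kernelteorema} appears in the statement of the argument at all.

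One bookkeeping point should be fixed. Since the bracket $\langle\cdot,\cdot\rangle$ in this paper is conjugate-linear in the second argument, the adjoint relation for convolution is $\langle a\ast h,v\rangle=\langle a,v\ast h^{*}\rangle$ with $h^{*}(z)=\overline{h(-z)}$, not with $\widetilde h(z)=h(-z)$. Accordingly the scalar identity you need is $\overline{V_{\varphi}f}\,V_{\phi}g=W(g,f)\ast W(\phi,\varphi)^{*}=W(g,f)\ast\widetilde{W(\varphi,\phi)}$; as written, $\widetilde{W(\phi,\varphi)}$ has the conjugation (equivalently, the order of the windows) wrong unless $\phi=\varphi$. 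The direct computation you propose --- or Moyal's formula combined with $W(M_\omega T_x\phi,M_\omega T_x\varphi)=T_{(x,\omega)}W(\phi,\varphi)$ --- produces the correct involution automatically, so this is a slip of notation rather than a gap in the method.
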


\begin{proof}
The formal expressions given below are justified due to the absolute convergence of the involved integrals and
the standard interpretation of oscillatory integrals in distributional setting. We refer to \cite[Section 5]{Teof2018}
for this and for a detailed calculations.

The calculations from the proof of \cite[Theorem 4]{Teof2018}
yield the following  kernel representation of \eqref{weakblo}:
$$
\langle \aaf \vec{f}, \vec{g} \rangle
=
\langle k, \prod_{j=1} ^n \overline{f_j}   \otimes \prod_{j=1} ^n g_j  \rangle,
$$
where the kernel $k = k(t,s)$, is given by
\begin{equation} \label{kernel}
k(t,s)
= \int_{\mathbb{R}^{2nd}} a(x,\omega)
\prod_{j=1} ^n \overline{M_{\omega_j} T_{x_j} \varphi _j} (t) \cdot
\prod_{j=1} ^n M_{\omega_j} T_{x_j} \phi_j  (s) dx d\omega,
\end{equation}
$t = (t_1, t_2,\dots, t_n),$  $s = (s_1, s_2,\dots, s_n )$, $ t_j, s_j \in \mathbb{R}^{d},$ $ j=1,2,\dots, n$.

\par

To calculate  the convolution $ a\ast ( \prod_{j=1} ^n W(\phi_j,\varphi_j)) = a \ast W(\vec{\phi},\vec{\varphi})$
we use  $ W (g,f)  = \overline{W(f,g)}$, the commutation relation $ T_x M_\omega = e^{-2 \pi i x \cdot \omega} M_\omega T_x,$
and the covariance property of the Wigner transform:
$$
W(T_{x_j} M_{\omega_j} \phi_j, T_{x_j} M_{\omega_j}  \varphi_j)  (p_j,q_j)
= W (\phi_j,\varphi_j) (p_j-x_j, q_j-\omega_j), \;\;\; j=1,2,\dots, n.
$$
Let $ p = (p_1,p_2,\dots, p_n),$ $ q = (q_1,q_2\dots, q_n),$ $ p_j,q_j \in \mathbb{R}^{d},$ $j=1,2,\dots, n.$
Then,
\begin{multline} \label{convolution!}
a \ast W(\vec{\phi},\vec{\varphi}) (p,q)
=
\int_{\mathbb{R}^{2nd}}  a(x,\omega) \times \\
\left ( \int_{\mathbb{R}^{nd}} \int_{\mathbb{R}^{nd}}
 \prod_{j=1} ^n
M_{\omega_j}  T_{x_j} \phi_j(p_j+\frac{t_j}{2})
\cdot
 \prod_{j=1} ^n \overline{ M_{\omega_j}  T_{x_j} \varphi_j}
(p_j-\frac{t_j}{2})
e^{-2\pi i q \cdot t}
dt \right) dx d\omega,
\end{multline}
where $q\cdot t $ is the scalar product of $q,t\in  \mathbb{R}^{d},$ cf. \cite[Section 5]{Teof2018}.

\par

Therefore,
\begin{multline*}
\langle L_{ a\ast  W(\vec{\phi},\vec{\varphi})} \vec{f}, \vec{g} \rangle
=
\langle  a\ast  \prod_{j=1} ^n W(\phi_j,\varphi_j), W( \vec{g}, \vec{f}) \rangle
=
\int_{\mathbb{R}^{2nd}}   a(x,\omega) \times
\\[1ex]
\int_{\mathbb{R}^{nd}} \big (
\int_{\mathbb{R}^{nd}}
 \prod_{j=1} ^n
M_{\omega_j}  T_{x_j} \phi_j(p_j+\frac{t_j}{2}) \cdot
\prod_{j=1} ^n \overline{  M_{\omega_j}  T_{x_j} \varphi_j}
(p_j-\frac{t_j}{2})
\times \\
\prod_{j=1} ^n f_j (p_j -\frac{t_j}{2}) \cdot  \prod_{j=1} ^n \overline{g_j}(p_j+\frac{t_j}{2})
dt \big ) dp dx d\omega,
\end{multline*}
Finally, after performing the change of variables we obtain
$$
\langle L_{ a\ast   W(\vec{\phi},\vec{\varphi}) } \vec{f}, \vec{g} \rangle =
\langle k,  \prod_{j=1} ^n \overline{f_j }   \otimes \prod_{j=1} ^n g_j   \rangle,
$$
where the kernel $k$ is given by \eqref{kernel}. The theorem now follows from the uniqueness of the kernel representation, Theorem \ref{kernelteorema}.
\end{proof}

\section{Continuity properties of localization operators} \label{sec03}

We first recall the sharp estimates of the modulation space norm for
the cross-Wigner distribution given in \cite{CN}.
There it is shown that the sufficient conditions for the continuity of the cross-Wigner distribution on
modulation spaces are also necessary (in the un-weighted case).
Related results can be found elsewhere, e.g.  in \cite{Toft2, To8, Teof2016}. In many situations such results overlap.
For example, Proposition 10 in \cite{Teof2018} coincides with certain sufficient conditions from \cite[Theorem 1.1]{CN}
when restricted to $ \masfR (\mathrm{p}) = 0,$ $t_0 = -t_1,$ and $ t_2 = |t_0|$.

\begin{thm} \label{thm-cross-Wigner}
Let there be given $ s \in \mathbb{R}$  and $ p_i, q_i, p, q \in [1,\infty],$ such that
\begin{equation} \label{uslov1}
p \leq p_i, q_i \leq q, \;\;\; i = 1,2
\end{equation}
and
\begin{equation} \label{uslov2}
\min \left \{ \frac{1}{p_1} + \frac{1}{p_2},   \frac{1}{q_1} + \frac{1}{q_2} \right \}
\geq
\frac{1}{p} + \frac{1}{q}.
\end{equation}
If $f,g \in \mathcal{S} (\mathbb{R}^{d}), $
then the map $ (f,g) \mapsto  W(f,g) $
where $ W $ is the cross-Wigner distribution given by \eqref{cross-Wigner}
extends to sesquilinear continuous map from
$ M^{p_1, q_1} _{|s|} (\mathbb{R}^{d}) \times  M^{p_2,q_2} _{s} (\mathbb{R}^{d})  $ to
$ {M}^{p,q} _{s,0} (\mathbb{R}^{2d}) $ and
\begin{equation} \label{cross-Wig-mod}
\|  W(f,g) \|_{ {M}^{p,q} _{s,0} } \lesssim \| f\|_{M^{p_1, q_1} _{|s|}}  \|g \|_{M^{p_2,q_2} _{s} }.
\end{equation}
Viceversa, if there exists a constant $C>0$ such that
$$
\|  W(f,g) \|_{ {M}^{p,q}} \lesssim \| f\|_{M^{p_1, q_1} }  \|g \|_{M^{p_2,q_2} }.
$$
then \eqref{uslov1} and \eqref{uslov2} must hold.
\end{thm}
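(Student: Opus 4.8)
The statement reproduces the sharp modulation-space bounds for the cross-Wigner distribution established in \cite{CN}, and the natural plan is to handle its two implications separately, both resting on the identification of the Wigner distribution with a short-time Fourier transform on phase space.

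\emph{Sufficiency.} Fix $\phi_1,\phi_2\in\mathcal{S}(\mathbb{R}^d)\setminus 0$ and take $\Phi=W(\phi_1,\phi_2)\in\mathcal{S}(\mathbb{R}^{2d})\setminus 0$ as a window for the STFT of functions on $\mathbb{R}^{2d}$. The engine of the argument is the ``magic formula'': writing out both sides from \eqref{STFT!} and \eqref{cross-Wigner} and carrying out the resulting Gaussian-type change of variables yields, for $z,\zeta\in\mathbb{R}^{2d}$,
$$
|V_{\Phi}W(f,g)(z,\zeta)| = |V_{\phi_1}f(z+\tfrac{1}{2}\mathcal{J}\zeta)|\;|V_{\phi_2}g(z-\tfrac{1}{2}\mathcal{J}\zeta)|,
$$
with $\mathcal{J}$ the standard symplectic matrix. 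Substituting this into $\|W(f,g)\|_{M^{p,q}_{s,0}}$ turns the modulation norm into a weighted mixed $L^{p,q}$ norm of the product $|V_{\phi_1}f|\cdot|V_{\phi_2}g|$ in the variables $z$ (space) and $\zeta$ (frequency). The frequency weight $\langle\zeta\rangle^{s}$ is then distributed by Peetre's inequality in the form $\langle\zeta\rangle^{s}\lesssim\langle z+\tfrac{1}{2}\mathcal{J}\zeta\rangle^{|s|}\langle z-\tfrac{1}{2}\mathcal{J}\zeta\rangle^{s}$, which is valid for \emph{every} real $s$; this asymmetry is exactly why $f$ enters through $M^{p_1,q_1}_{|s|}$ while $g$ enters through $M^{p_2,q_2}_{s}$. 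After the change of variables $u=z+\tfrac{1}{2}\mathcal{J}\zeta$ what remains is to estimate, in a mixed $L^{p,q}$ norm, a convolution of $|V_{\phi_1}f|\langle\cdot\rangle^{|s|}$ against the reflection of $|V_{\phi_2}g|\langle\cdot\rangle^{s}$; the hypotheses $p\le p_i,q_i\le q$ and \eqref{uslov2} are precisely the exponent conditions under which the relevant mixed-norm Young/Hölder inequality holds, and they produce \eqref{cross-Wig-mod} for $f,g\in\mathcal{S}$. The general statement then follows by density, with a weak-$*$ argument when an exponent equals $\infty$.

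\emph{Necessity.} Here one tests the unweighted estimate $\|W(f,g)\|_{M^{p,q}}\lesssim\|f\|_{M^{p_1,q_1}}\|g\|_{M^{p_2,q_2}}$ against families of functions and lets parameters degenerate. Dilated Gaussians $f(x)=e^{-\pi\lambda|x|^2}$, $g(x)=e^{-\pi\mu|x|^2}$ have Gaussian Wigner distributions on $\mathbb{R}^{2d}$, so every norm involved is an explicit power of $\lambda$ and $\mu$; letting $\lambda,\mu\to 0$ and $\lambda,\mu\to\infty$ independently forces the comparisons \eqref{uslov1}. The finer balance \eqref{uslov2} is then obtained either from an anisotropic (two-parameter) dilation, or by testing with functions whose STFT is, up to harmless errors, the indicator of a rectangle with tunable side lengths, which produces the required inequality among $1/p_1+1/p_2$, $1/q_1+1/q_2$, and $1/p+1/q$. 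Taking contrapositives gives the claim.

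\emph{Where the work lies.} The magic formula and the weight splitting are routine. The genuine difficulties are, on the sufficiency side, running the mixed-norm convolution inequality with full care, since such estimates collapse outside the region cut out by \eqref{uslov1} and \eqref{uslov2} and the extremal exponents must be inspected separately; and, on the necessity side, choosing test families rich enough --- both radial (Gaussian) and anisotropic (box-type) --- to recover \emph{all} of \eqref{uslov1} and \eqref{uslov2} rather than only part of them. Since all of this is carried out in detail in \cite{CN}, for the purposes of the present paper it suffices to invoke that reference.
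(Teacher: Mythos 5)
Your proposal follows essentially the same route as the paper: both reduce the estimate to the ``magic formula'' expressing $V_{W(\psi_1,\psi_2)}W(g,f)$ as a product of shifted STFTs of $f$ and $g$, rewrite the modulation norm as a (weighted) mixed-norm convolution estimate, and defer the detailed exponent case study --- and the necessity part --- to the reference \cite{CN}. The additional details you supply (Peetre's inequality for the weight splitting, Gaussian and anisotropic test families for necessity) are consistent with that source, so the proposal is correct.
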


\begin{proof}
We omit the proof which is given in \cite[Section 3]{CN}, and recall here only the main formulas which highlight its most
important parts.

The first formula is the well-known relation between the Wigner transform and the STFT (see \cite[Lemma 4.3.1]{book}):
$$
W (f,g) (x, \omega) = 2^d e^{4\pi i x \cdot \omega} V_{\overline{g^*}} f (2x, 2\omega), \;\;\;
f,g \in  {\mathcal S} (\mathbb{R}^d).
$$

To estimate the modulation space norm of $ W (f,g) (x, \omega) $ we fix $ \psi_1, \psi_2 \in \mathcal{S} (\mathbb{R}^{d}) \setminus 0 $ and use the fact that
modulation spaces are independent on the choice of the window function
from $\mathcal{S} (\mathbb{R}^{2d}) \setminus 0$, Theorem \ref{modproerties} 1).
By choosing the window to be $W(\psi_1, \psi_2)$, after some calculations we obtain:
\begin{multline*}
(V_{W(\psi_1, \psi_2)} W(g,f)) (z,\zeta)
\\[1ex]
=
e^{-2\pi i z_2 \zeta_2}
\overline{ V_{\psi_1} f} (z_1 + \frac{\zeta_2}{2}, z_2 - \frac{\zeta_1}{2})
V_{\psi_2}g (z_1 - \frac{\zeta_2}{2}, z_2 + \frac{\zeta_1}{2}),
\end{multline*}
cf.  the proof of \cite[Lemma 14.5.1 (b)]{book}. Consequently (cf. \cite[Section 3]{CN}),
\begin{eqnarray*}
\|  W(g,f)  \|_{M^{p,q} _{s,0}}
& \asymp  &
\left( \int_{\mathbb{R}^{2d}}
(| V_{\psi_1} f |^p * |V_{\psi_2} g ^* |^p)^{q/p} (\zeta_2,- \zeta_1)
\langle (\zeta_2,- \zeta_1) \rangle ^{s q} d \zeta \right )^{1/q} \\
& = &
\|  | V_{\psi_1} f |^p * |V_{\psi_2} g ^* |^p\|_{L^{q/p} _{ps,0}}.
\end{eqnarray*}
Then one proceeds with a careful case study to obtain \eqref{cross-Wig-mod}. We refer to \cite{CN} for details.
\end{proof}

From the inspection of the proof of Theorem \ref{thm-cross-Wigner}
given in \cite[Section 3]{CN}, the definition of
$W (\vec{f}, \vec{g}) $ given by \eqref{cross-Wigner-4d} and the use of the kernel theorem
we conclude the following.

\begin{cor} \label{cor-cross-Wigner}
Let the assumptions of Theorem \ref{thm-cross-Wigner} hold.
If $ \vec{f} = (f_1,f_2, \dots, f_n) $, $ \vec{g} = (g_1,g_2, \dots, g_n) $
and $f_j, g_j \in \mathcal{S} (\mathbb{R}^{d}), $ $ j=1,2,\dots, n$,
then the map $ (\vec{f},\vec{g}) \mapsto  W(\vec{f},\vec{g}) $,
where $ W $ is the cross-Wigner distribution given by \eqref{cross-Wigner-4d}
extends to a continuous map from
$\mathcal{M}^{p_1, q_1} _{|s|} (\mathbb{R}^{d}) \times \mathcal{M}^{p_2,q_2} _{s} (\mathbb{R}^{d})  $ to
$ \mathcal{M}^{p,q} _{s,0} (\mathbb{R}^{2d}) $, where the modulation spaces are given by \eqref{modnormtensor}.
\end{cor}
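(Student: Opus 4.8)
The plan is to reduce the multilinear statement in Corollary \ref{cor-cross-Wigner} to the linear statement of Theorem \ref{thm-cross-Wigner} via the tensor-product structure of all objects involved. First I would observe that the multilinear cross-Wigner transform \eqref{cross-Wigner-4d} factors as a tensor product of linear cross-Wigner transforms: after rearranging the integration variables, $W(\vec f,\vec g)(x,\omega) = \prod_{j=1}^n W(f_j,g_j)(x_j,\omega_j)$, i.e. $W(\vec f,\vec g) = W(f_1,g_1)\otimes \cdots \otimes W(f_n,g_n)$ as a function on $\mathbb{R}^{2nd} = \mathbb{R}^{2d}\times\cdots\times\mathbb{R}^{2d}$. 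This is exactly the identity that was already used implicitly in Lemma \ref{L-and-W}, so it needs only a one-line justification here.

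Next I would recall that by the definition \eqref{modnormtensor}, the space $\mathcal{M}^{p_1,q_1}_{|s|}(\mathbb{R}^d)$ consists of vectors $\vec f = (f_1,\dots,f_n)$ with $\vec f$ (identified with $f_1\otimes\cdots\otimes f_n$) lying in $M^{p_1,q_1}_{|s|}(\mathbb{R}^{nd})$, and similarly for the other two spaces; this identification is precisely the content of the kernel theorem for $\mathcal{S},\mathcal{S}'$ invoked after \eqref{modnormtensor}. So the claim becomes: the tensor map $(f_1\otimes\cdots\otimes f_n,\, g_1\otimes\cdots\otimes g_n)\mapsto W(f_1,g_1)\otimes\cdots\otimes W(f_n,g_n)$ is continuous from $M^{p_1,q_1}_{|s|}(\mathbb{R}^{nd})\times M^{p_2,q_2}_{s}(\mathbb{R}^{nd})$ to $M^{p,q}_{s,0}(\mathbb{R}^{2nd})$. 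The cleanest route is to iterate the linear bound \eqref{cross-Wig-mod} one factor at a time, using that modulation spaces behave well under tensor products; concretely, the key sub-lemma is that if $F\in M^{p,q}_{s,0}(\mathbb{R}^{2d})$ and $G\in M^{p,q}_{s,0}(\mathbb{R}^{2(n-1)d})$ then $F\otimes G\in M^{p,q}_{s,0}(\mathbb{R}^{2nd})$ with norm control — which follows from the factorization $V_{\psi_1\otimes\psi_2}(F\otimes G)=V_{\psi_1}F\otimes V_{\psi_2}G$ of the STFT of a tensor product together with the submultiplicativity $\langle(x,x')\rangle^s \lesssim \langle x\rangle^{|s|}\langle x'\rangle^{|s|}$ (and the matching lower bound when $s\ge 0$) of the polynomial weight, applied with the mixed-norm structure respected by grouping the $x$-variables together and the $\omega$-variables together. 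Since in Theorem \ref{thm-cross-Wigner} the target weight $\langle\cdot\rangle^{s,0}$ only weights the space variable and the hypotheses \eqref{uslov1}--\eqref{uslov2} are symmetric and independent of $n$, the induction on $n$ closes with the same exponents at each stage.

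The main obstacle will be handling the interaction between the \emph{tensor product} and the \emph{mixed (p,q)-norm} correctly: a tensor product on $\mathbb{R}^{2nd}$ does not split the mixed norm in the naive way, because the mixed norm groups all position variables $(x_1,\dots,x_n)$ in the inner integral and all frequency variables $(\omega_1,\dots,\omega_n)$ in the outer integral, whereas the tensor product groups variables by factor $(x_j,\omega_j)$. One must check that $\| V_{\psi_1}F_1\otimes\cdots\otimes V_{\psi_n}F_n\|_{L^{p,q}_{s,0}(\mathbb{R}^{2nd})}$ is comparable to $\prod_j \|V_{\psi_j}F_j\|_{L^{p,q}_{s,0}(\mathbb{R}^{2d})}$; this is where the condition $p\le q$ (which is forced by \eqref{uslov1}) is used, via Minkowski's integral inequality, exactly as in the proof of the well-known tensor-factorization property $M^{p,q}(\mathbb{R}^{d_1+d_2}) \supseteq M^{p,q}(\mathbb{R}^{d_1})\hat\otimes M^{p,q}(\mathbb{R}^{d_2})$ when $p\le q$. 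Since the hypothesis \eqref{uslov1} of Theorem \ref{thm-cross-Wigner} indeed guarantees $p\le p_i,q_i\le q$ hence $p\le q$, this step goes through; I would state it as an auxiliary lemma and then conclude the corollary by induction on $n$, with the base case $n=1$ being Theorem \ref{thm-cross-Wigner} itself.
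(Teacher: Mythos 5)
Your reduction starts from the right observation but then takes a detour that does not quite close in the weighted case. The paper's intended argument is the one\--line reduction hiding in your first paragraph: since $\vec f$ and $\vec g$ enter only through the pure tensors $F=f_1\otimes\cdots\otimes f_n$ and $G=g_1\otimes\cdots\otimes g_n$, formula \eqref{cross-Wigner-4d} is \emph{literally} the linear cross-Wigner distribution \eqref{cross-Wigner} of $F$ and $G$ in dimension $nd$, with the position variables grouped together and the frequency variables grouped together exactly as in \eqref{modnormtensor}. Hence Theorem \ref{thm-cross-Wigner} applied with $d$ replaced by $nd$, combined with the kernel-theorem identification of $\mathcal{M}^{p,q}_{s,t}(\mathbb{R}^{nd})$ inside $M^{p,q}_{s,t}(\mathbb{R}^{nd})$, yields the corollary at once, for every $s\in\mathbb{R}$ and with the correct norms on both sides; no factor-by-factor induction is needed.

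The induction you propose instead has a genuine gap when $s\neq 0$, because the weight $\langle \cdot \rangle^{s}$ on a product space does not factor. Two steps break. First, the tensor sub-lemma $\|F\otimes G\|_{M^{p,q}_{s,0}}\lesssim\|F\|_{M^{p,q}_{s,0}}\|G\|_{M^{p,q}_{s,0}}$ rests on $\langle(\zeta,\zeta')\rangle^{s}\le\langle\zeta\rangle^{s}\langle\zeta'\rangle^{s}$, which holds for $s\ge 0$ but fails for $s<0$ (take $\zeta=\zeta'$ large); replacing $s$ by $|s|$ on the right, as you suggest, puts the factors into the wrong (smaller) spaces. Second, and more seriously, even for $s>0$ your induction delivers the bound $\|W(\vec f,\vec g)\|\lesssim\prod_j\|f_j\|_{M^{p_1,q_1}_{|s|}}\prod_j\|g_j\|_{M^{p_2,q_2}_{s}}$, and the product $\prod_j\|f_j\|_{M^{p_1,q_1}_{|s|}}$ is \emph{not} controlled by the joint norm $\|\vec f\|_{\mathcal{M}^{p_1,q_1}_{|s|}}$ of \eqref{modnormtensor}: if every $f_j$ is modulated to frequency $R$ the product grows like $R^{n|s|}$ while the joint norm grows like $R^{|s|}$, so continuity between the spaces named in the corollary does not follow. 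Incidentally, the obstacle you do worry about is not one: for a pure tensor tested against a tensor window the mixed $L^{p,q}$ norm factors \emph{exactly} by Fubini (the inner integral of a product of functions of disjoint variables is the product of the inner integrals, and likewise for the outer integral), so no Minkowski inequality and no condition $p\le q$ is needed there. In the unweighted case $s=0$ your argument is correct, but the direct dimensional substitution is both shorter and covers all $s$.
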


\par

Next we give an  extension of \cite[Theorem 14.5.2]{book} and \cite[Theorem 14]{Teof2018} to the multilinear Weyl $\Psi$DOs.
Recall, if $\sigma \in M^{\infty, 1} (\mathbb{R}^{2d}) $ is the Weyl symbol of $ L_\sigma $, then
\cite[Theorem 14.5.2]{book} says that
$L_\sigma $ is bounded on
$ M^{p,q} (\mathbb{R}^{d})$, $ 1\leq p,q \leq \infty $.
This result has a long history starting from the Calderon-Vaillancourt theorem on boundedness of
the pseudodifferential operators with smooth and bounded symbols on $ L^2 (\mathbb{R}^{d})$, \cite{CalVai}.
It is generalized by Sj{\"o}strand in \cite{Sjostrand1}
where $M^{\infty,1}$ is used as appropriate symbol class.
Sj{\"o}strand's results were thereafter extended in
\cite{Grochenig0,book,Grochenig1b,Toft2,To8,Toft2007a}.
Moreover, we refer to \cite{BGHKasso, BKasso2004,  BKasso2006}
for the multilinear Kohn-Nirenberg $\Psi$DOs, and the recent contribution \cite{CNT}
related to $\tau-$$\Psi$DOs (these include both  Kohn-Nirenberg (when $\tau = 0$) and Weyl operators
(when $\tau = 1/2$).

\par

The following fact related to symbols  $\sigma \in M^{\infty, 1} (\mathbb{R}^{2nd}) $  is a straightforward extension of \cite[Theorem 14]{Teof2018}.

\begin{thm} \label{bounded Weyl}
Let $ \sigma \in M^{\infty, 1} (\mathbb{R}^{2nd}) $ and let   $L_\sigma $ be given by \eqref{oscilatory-2nd}.
The operator $L_\sigma $ is bounded from $ \mathcal{M} ^{p,q} (\mathbb{R}^{nd}) $ to $ \mathcal{M} ^{p,q} (\mathbb{R}^{nd}) $,
$ 1\leq p ,q \leq  \infty,$ with a uniform estimate $ \| L_\sigma \|_{op} \leq \| \sigma \|_{ M^{\infty, 1}} $ for the operator norm.
\end{thm}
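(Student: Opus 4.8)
The plan is to reduce this multilinear statement to the corresponding \emph{linear} boundedness result on $\mathbb{R}^{nd}$. The crucial observation is that, under the kernel-theorem identification of Section \ref{sec01}, the multilinear Weyl operator \eqref{oscilatory-2nd} is nothing but an ordinary linear Weyl pseudodifferential operator in dimension $nd$.

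First I would record the identification of spaces. By the kernel theorem for $\mathcal{S}(\mathbb{R}^{nd})$, $\mathcal{S}'(\mathbb{R}^{nd})$ together with the window independence of modulation spaces (Theorem \ref{modproerties} 1)), the assignment sending a vector $\vec f=(f_1,\dots,f_n)$ to the tensor product $f_1\otimes\cdots\otimes f_n$ extends to a topological isomorphism $\mathcal{M}^{p,q}(\mathbb{R}^{nd})\cong M^{p,q}(\mathbb{R}^{nd})$. Choosing the product window $\vec\varphi=\varphi_0\otimes\cdots\otimes\varphi_0$ built from a fixed (normalized Gaussian) $\varphi_0\in\mathcal{S}(\mathbb{R}^d)\setminus 0$, formula \eqref{STFTtensor-n} gives $V_{\vec\varphi}\vec f(x,\omega)=V_{\varphi_0\otimes\cdots\otimes\varphi_0}(f_1\otimes\cdots\otimes f_n)(x,\omega)$ with $x,\omega\in\mathbb{R}^{nd}$, so by \eqref{modnormtensor} (for $s=t=0$) this isomorphism is norm preserving. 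Under the same identification the multilinear Wigner transform \eqref{cross-Wigner-4d} becomes the ordinary cross-Wigner distribution on $\mathbb{R}^{nd}$: the phase $e^{-2\pi i\mathcal{I}\omega t}=e^{-2\pi i\sum_j\omega_j t_j}$ is exactly the canonical one on $\mathbb{R}^{nd}\times\mathbb{R}^{nd}$, hence \eqref{cross-Wigner-4d} coincides with \eqref{cross-Wigner} evaluated at the pair of tensor products $(f_1\otimes\cdots\otimes f_n,\,g_1\otimes\cdots\otimes g_n)$.

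Next, the analogous factorization $e^{2\pi i\mathcal{I}(x-y)\cdot\omega}=e^{2\pi i\sum_j(x_j-y_j)\omega_j}$ in \eqref{oscilatory-2nd} shows that $L_\sigma$ is precisely the linear Weyl pseudodifferential operator on $\mathbb{R}^{nd}$ with Weyl symbol $\sigma\in M^{\infty,1}(\mathbb{R}^{2nd})$; this is exactly what the weak identity $\langle L_\sigma\vec f,\vec g\rangle=\langle\sigma,W(\vec g,\vec f)\rangle$ of Lemma \ref{L-and-W} says once the Wigner transform is read as above. Therefore the boundedness in question is a particular instance of the linear theory: by \cite[Theorem 14.5.2]{book} (equivalently, the Weyl formulation \cite[Theorem 14]{Teof2018}), a Weyl operator on $\mathbb{R}^{D}$ with symbol in $M^{\infty,1}(\mathbb{R}^{2D})$ is bounded on $M^{p,q}(\mathbb{R}^{D})$ for all $1\le p,q\le\infty$, with operator norm controlled by $\|\sigma\|_{M^{\infty,1}}$; apply this with $D=nd$ and transport the estimate back through the norm-preserving isomorphism of the previous paragraph to get $\|L_\sigma\vec f\|_{\mathcal{M}^{p,q}}\le\|\sigma\|_{M^{\infty,1}}\|\vec f\|_{\mathcal{M}^{p,q}}$. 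Equivalently, one may repeat Gr\"ochenig's Gabor-frame proof of \cite[Theorem 14.5.2]{book} verbatim with $nd$ in place of $d$, which is the ``straightforward extension'' alluded to in the statement.

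The only point requiring care — not a genuine obstacle — is the bookkeeping of constants: the value $1$ in $\|L_\sigma\|_{op}\le\|\sigma\|_{M^{\infty,1}}$ depends on using compatibly normalized windows in the definition of $M^{\infty,1}(\mathbb{R}^{2nd})$, of $M^{p,q}(\mathbb{R}^{nd})$, and of the Gabor analysis/synthesis operators in the proof of \cite[Theorem 14.5.2]{book}; with the normalized Gaussian window this is the constant already obtained in \cite[Theorem 14]{Teof2018}, and no new estimate beyond the linear theory enters. One should also note that, although $L_\sigma\vec f$ is in general no longer a rank-one tensor, it is by construction an element of $M^{p,q}(\mathbb{R}^{nd})$, so the target space $\mathcal{M}^{p,q}(\mathbb{R}^{nd})$ is to be understood via the identification above.
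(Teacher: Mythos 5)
Your proposal is correct and follows essentially the same route as the paper, which states Theorem \ref{bounded Weyl} without proof, presenting it as a ``straightforward extension'' of \cite[Theorem 14]{Teof2018} and hence of \cite[Theorem 14.5.2]{book}. Your reduction --- identifying $\mathcal{M}^{p,q}(\mathbb{R}^{nd})$ with $M^{p,q}(\mathbb{R}^{nd})$ via the kernel theorem and reading \eqref{oscilatory-2nd} as an ordinary linear Weyl operator in dimension $nd$ acting on tensor products --- is precisely the content of that remark, spelled out in detail.
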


On the other hand, Theorem \ref{bounded Weyl} is a special case of \cite[Theorem 5.1.]{CN} if $L_\sigma $ is a linear operator. Here below we give the multilinear version of  \cite[Theorem 5.1.]{CN}.

\begin{thm} \label{bounded Weyl-extended}
Let there be given $ s \geq 0$  and $ p_i, q_i, r_i,  p, q \in [1,\infty],$ such that
\begin{equation} \label{uslov11}
q\leq  \min \{ p_1 ', q_1 ', p_2, q_2 \}
\end{equation}
and
\begin{equation} \label{uslov22}
\min \left \{ \frac{1}{p_1} + \frac{1}{p_2 '},   \frac{1}{q_1} + \frac{1}{q_2 '}  \right \}
\geq
\frac{1}{p'} + \frac{1}{q'}.
\end{equation}
Then the operator $L_\sigma $ given by \eqref{oscilatory-2nd} with symbol
$ \sigma \in M^{p,q} _{s,0} (\mathbb{R}^{2nd}) $, from
$ \mathcal{S} (\mathbb{R}^{nd}) $ to $ \mathcal{S}' (\mathbb{R}^{nd}), $
extends uniquely to a bounded operator from
$ \mathcal{M} ^{p_1,q_1} _{s,0}  (\mathbb{R}^{nd}) $ to $ \mathcal{M} ^{p_2,q_2} _{s,0}  (\mathbb{R}^{nd}) $,
with the estimate
\begin{equation}   \label{zakljucak}
\| L_\sigma \vec{f} \|_{\mathcal{M} ^{p_2,q_2} _{s,0} } \lesssim \| \sigma \|_{  M^{p,q} _{s,0}} \| \vec{f} \|_{\mathcal{M} ^{p_1,q_1} _{s,0}}.
\end{equation}

In particular, when $\sigma \in  M^{\infty, 1}  (\mathbb{R}^{2nd})$ we have $ \| L_\sigma \|_{op} \leq \| \sigma \|_{ M^{\infty, 1}} $ for the operator norm.

Vice versa, if \eqref{zakljucak} holds for $s=0$, and for every
$ \vec{f} \in \mathcal{S} (\mathbb{R}^{nd}) $,
$\sigma \in \mathcal{S}' (\mathbb{R}^{2nd})$, then \eqref{uslov1} and \eqref{uslov2} must be satisfied.
\end{thm}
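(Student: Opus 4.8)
The whole point is that nothing genuinely multilinear happens in this statement: the operator in \eqref{oscilatory-2nd} is \emph{literally} the linear Weyl pseudodifferential operator on $\mathbb{R}^{nd}$, since the phase $\mathcal{I}(x-y)\cdot\omega=\sum_{j=1}^n(x_j-y_j)\omega_j$ is just the Euclidean inner product of $x-y$ and $\omega$ in $\mathbb{R}^{nd}$; the multilinear reading only records how $L_\sigma$ acts on the (dense) subset of pure tensors $\vec f=f_1\otimes\cdots\otimes f_n$. On the function-space side, by the kernel-theorem identification recorded after Definition \ref{modspaces} together with window-independence (Theorem \ref{modproerties} 1)), the vector modulation spaces $\mathcal{M}^{p_i,q_i}_{s,0}(\mathbb{R}^{nd})$ from \eqref{modnormtensor} --- defined with the tensor window $\vec\varphi$ --- are, with equivalent norms, the ordinary modulation spaces $M^{p_i,q_i}_{s,0}(\mathbb{R}^{nd})$; the same applies to the symbol space, and the target $\mathcal{M}^{p_2,q_2}_{s,0}(\mathbb{R}^{nd})$ must here be read as $M^{p_2,q_2}_{s,0}(\mathbb{R}^{nd})$, since $L_\sigma\vec f$ is in general a genuine function on $\mathbb{R}^{nd}$, not a pure tensor. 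So the statement is exactly \cite[Theorem~5.1]{CN} read in ambient dimension $nd$ rather than $d$.

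Accordingly, the plan is: first make the identifications above precise, then observe that \eqref{uslov11} and \eqref{uslov22} are word-for-word the hypotheses of \cite[Theorem~5.1]{CN} (with $d$ replaced by $nd$), and invoke that result to obtain the continuous extension $L_\sigma\colon M^{p_1,q_1}_{s,0}(\mathbb{R}^{nd})\to M^{p_2,q_2}_{s,0}(\mathbb{R}^{nd})$ with the bound \eqref{zakljucak}; pushing this back through the isomorphism gives the assertion on $\mathcal{M}^{p_1,q_1}_{s,0}\to\mathcal{M}^{p_2,q_2}_{s,0}$. The special case $\sigma\in M^{\infty,1}(\mathbb{R}^{2nd})$ is the choice $p=\infty$, $q=1$, $p_1=p_2$, $q_1=q_2$: then \eqref{uslov11} reads $1\le\min\{p_1',q_1',p_1,q_1\}$ and \eqref{uslov22} reads $\min\{1/p_1+1/p_1',1/q_1+1/q_1'\}\ge 0$, both automatic, while the operator-norm bound $\|L_\sigma\|_{op}\le\|\sigma\|_{M^{\infty,1}}$ descends from the linear estimate (consistently with Theorem \ref{bounded Weyl}).

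For the converse I would run a duality argument through the weak formula of Lemma \ref{L-and-W}, namely $\langle L_\sigma\vec f,\vec g\rangle=\langle\sigma,W(\vec g,\vec f)\rangle$. Using the duality of modulation spaces (Theorem \ref{modproerties} 4), supplemented by Lemma \ref{dualnost} when $p$ or $q$ equals $\infty$), the estimate \eqref{zakljucak} with $s=0$, holding for all $\sigma\in\mathcal{S}'(\mathbb{R}^{2nd})$ and all $\vec f\in\mathcal{S}(\mathbb{R}^{nd})$, is equivalent to a continuity estimate for the multilinear cross-Wigner distribution $W(\vec g,\vec f)$ between products of vector modulation spaces, of precisely the type appearing in Theorem \ref{thm-cross-Wigner} and Corollary \ref{cor-cross-Wigner}. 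The necessity half of Theorem \ref{thm-cross-Wigner}, after the evident translation of indices under the correspondence ``symbol $\leftrightarrow$ output, input $\leftrightarrow$ second argument of $W$'', then forces \eqref{uslov1} and \eqref{uslov2}. (Equivalently, one may transport the necessity half of \cite[Theorem~5.1]{CN} directly.)

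The only step requiring care --- and it is not deep --- is the bookkeeping that the identification $\mathcal{M}^{p,q}_{s,0}(\mathbb{R}^{nd})\cong M^{p,q}_{s,0}(\mathbb{R}^{nd})$ intertwines the multilinear operator with the linear Weyl operator. Concretely, one should check that finite linear combinations of pure tensors $\bigotimes_j\varphi_j$ with $\varphi_j\in\mathcal{S}(\mathbb{R}^d)$ are dense in $\mathcal{S}(\mathbb{R}^{nd})$ and in $M^{p,q}_{s,0}(\mathbb{R}^{nd})$ when $p,q<\infty$ (and weak-$*$ dense, respectively dense in the subspaces of Lemma \ref{dualnost}, otherwise), so that the linear Weyl operator is the unique continuous extension of the multilinear one, and that no information is lost in passing between $n$-tuples and functions on $\mathbb{R}^{nd}$. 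Granting this, the proof reduces entirely to the already-established linear theory.
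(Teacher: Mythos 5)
Your proposal is correct and follows essentially the same route as the paper: the paper's own proof is precisely the duality argument $\langle L_\sigma\vec f,\vec g\rangle=\langle\sigma,W(\vec g,\vec f)\rangle$ of Lemma \ref{L-and-W} combined with the Wigner bound of Theorem \ref{thm-cross-Wigner}/Corollary \ref{cor-cross-Wigner} --- that is, the proof of \cite[Theorem 5.1]{CN} transplanted to $\mathbb{R}^{nd}$ under the tensor identification $\mathcal{M}^{p,q}_{s,0}\cong M^{p,q}_{s,0}$, which is exactly the reduction you invoke --- and for the converse the paper simply cites \cite[Theorem 1.1]{CTW}, matching your duality reduction to the necessity half of the Wigner estimates. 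The only blemish is the arithmetic in your $M^{\infty,1}$ special case: with $p=\infty$, $q=1$ condition \eqref{uslov22} reduces to $\min\{1/p_1+1/p_1',\,1/q_1+1/q_1'\}\ge 1$, not $\ge 0$, though it still holds (with equality), so the conclusion is unaffected.
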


\par

\begin{proof} The proof is a straightforward extension of the proof of \cite[Theorem 5.1.]{CN}, and we give it here for the sake of completeness.

When $ \vec{f} \in \mathcal{M} ^{p_1,q_1} _{s,0}  (\mathbb{R}^{nd}) $ and $\vec{g} \in
\mathcal{M} ^{p_2 ',q_2 ' } _{s,0}  (\mathbb{R}^{nd})$, their Wigner transform
$W(\vec{g},\vec{f}) = \overline{W(\vec{f},\vec{g})}$
belongs to $  M^{p',q'} _{-s,0} $ since the conditions \eqref{uslov1} and \eqref{uslov2} of Theorem \ref{thm-cross-Wigner}
are transferred to \eqref{uslov11} and \eqref{uslov22}, respectively.

Now, Lemma \ref{L-and-W} and the duality of modulation spaces give
\begin{multline*}
|\langle L_\sigma \vec{f} , \vec{g} \rangle| = |\langle \sigma, W(\vec{g},\vec{f}) \rangle|
\leq  \|  \sigma \|_{ M^{p,q} _{s,0}} \| W(\vec{f},\vec{g}) \|_{M^{p',q'} _{-s,0}} \\
\leq  C \| \vec{f} \|_{ \mathcal{M} ^{p_1,q_1} _{s,0} }
\| \vec{g} \|_{\mathcal{M} ^{p_2 ',q_2 ' } _{s,0}},
\end{multline*}
for some constant $C>0$ (and we used the fact that modulation spaces are closed under the complex conjugation).

We refer to \cite[Theorem 1.1.]{CTW} for the necessity of conditions \eqref{uslov11} and \eqref{uslov22}
(in linear case).
\end{proof}

Next, we combine different results established so far to obtain an extension of \cite[Theorem 15]{Teof2018}.
More precisely, we
use the relation between the Weyl pseudodifferential operators and the localization operators
(Lemma \ref{Weyl connection lemma}),  the convolution estimates for modulation spaces
(Theorem \ref{mainconvolution}), and boundedness of pseudodifferential operators (Theorem \ref{bounded Weyl-extended}) to obtain
continuity results for $ \aaf $ for different choices of windows and symbols.

\par

\begin{thm} \label{conv-a-cross-Wigner}
Let there be given $ s \geq 0$  and $ p_i, q_i, p, q \in [1,\infty],$ $i=0,1,2$ such that
\eqref{uslov11} and \eqref{uslov22} hold.
Moreover, let $q_0 \leq q,$ and
\begin{equation} \label{p_0 uslov}
p_0 \geq p \;\;\; \text{if} \;\;\; p\geq 2, \;\;\; \text{and } \;\;\; \frac{2p}{2-p} \geq p_0 \geq p
\;\;\; \text{if} \;\;\;  2> p \geq 1.
\end{equation}
If $\vec{\varphi} \in \mathcal{M}^{r_1} _{2s,0} (\mathbb{R}^{nd}), $ $\vec{\phi} \in \mathcal{M}^{r_2} _{2s,0} (\mathbb{R}^{nd}), $
where $  \frac{1}{r_1} + \frac{1}{r_2 } \geq 1,$
and $ a \in M^{p_0, q_0} _{s_0,t_0} ( \mathbb{R}^{2nd})$ with $ s_0 \geq -s $,
and $ \displaystyle t_0 \geq d \left ( \frac{1}{p} - \frac{1}{p_0 }  \right ) $ with
the strict inequality  when $ p_0 =p $,
then $ \aaf $ is continuous
from $ \mathcal{M}^{p_1,q_1} _{s,0} (\mathbb{R}^{nd}) $ to  $ \mathcal{M}^{p_2,q_2} _{s,0}(\mathbb{R}^{nd}) $
with
$$
\| \aaf \|_{op} \lesssim  \|a\|_{M^{p_0, q_0} _{s_0,t_0}} \| \vec{\varphi} \|_{\mathcal{M}^{r_1} _{2s,0}}
\| \vec{\phi} \|_ {\mathcal{M}^{r_2} _{2s,0}}.
$$
\end{thm}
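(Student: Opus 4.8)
The plan is to combine the three ingredients listed just before the theorem, in the order they appear. First I would invoke Theorem \ref{Weyl connection lemma}: the localization operator $\aaf$ coincides with the Weyl pseudodifferential operator $L_\sigma$ whose symbol is
$$
\sigma = a \ast W(\vec{\phi},\vec{\varphi}) = a \ast \Big( \prod_{j=1}^n W(\phi_j,\varphi_j) \Big).
$$
So the whole problem reduces to two independent estimates: (i) controlling $\| \sigma \|_{M^{p,q}_{s,0}}$ in terms of $\|a\|_{M^{p_0,q_0}_{s_0,t_0}}$ and the modulation-space norms of the windows, and (ii) applying Theorem \ref{bounded Weyl-extended} to deduce boundedness of $L_\sigma$ from $\mathcal{M}^{p_1,q_1}_{s,0}$ to $\mathcal{M}^{p_2,q_2}_{s,0}$, which is exactly why the hypotheses \eqref{uslov11} and \eqref{uslov22} are imposed.

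For step (i) I would first estimate the Wigner factor. Since $\varphi_j, \phi_j$ lie in modulation spaces (after the tensor/vector identification, $\vec\varphi \in \mathcal M^{r_1}_{2s,0}$, $\vec\phi\in\mathcal M^{r_2}_{2s,0}$), apply Corollary \ref{cor-cross-Wigner} (the multilinear form of the sharp Wigner estimate, Theorem \ref{thm-cross-Wigner}) with the unweighted exponent choice $p_1=q_1=r_1$, $p_2=q_2=r_2$, $p=q=1$: the condition \eqref{uslov1} becomes $1\le r_i\le\infty$ and \eqref{uslov2} becomes $\frac{1}{r_1}+\frac{1}{r_2}\ge 2$ — wait, that is too strong, so instead I would use $\frac1{r_1}+\frac1{r_2}\ge 1$ together with the target $M^{1,1}_{2s,0}$ only partially; more carefully, with $s$ replaced by $2s\ge 0$ and target exponent pair $(p,q)=(1,1)$ one needs $\min\{\frac1{r_1}+\frac1{r_2},\frac1{r_1}+\frac1{r_2}\}\ge \frac11+\frac11=2$. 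To avoid this I would instead take the target to be $M^{p',q'}$-type only after convolution; the cleanest route is: use $r_1,r_2$ with $\frac1{r_1}+\frac1{r_2}\ge1$ to land $W(\vec\phi,\vec\varphi)$ in $\mathcal M^{1,\infty}_{2s,0}$ (choosing $(p,q)=(1,\infty)$ in Theorem \ref{thm-cross-Wigner}, for which \eqref{uslov2} reads $\frac1{r_1}+\frac1{r_2}\ge 1+0=1$), so that
$$
\| W(\vec\phi,\vec\varphi) \|_{\mathcal M^{1,\infty}_{2s,0}} \lesssim \| \vec\phi \|_{\mathcal M^{r_2}_{2s,0}} \| \vec\varphi \|_{\mathcal M^{r_1}_{2s,0}}.
$$
Then step (i) is completed by the convolution estimate for modulation spaces, Theorem \ref{mainconvolution} part 2): with $\masfR(\mathrm p)$ determined by $p_0$, $1$, and $p'$, and the weight conditions \eqref{lastineq2A}–\eqref{lastineq2C} translating precisely into $s_0\ge -s$ (after halving and matching $2s$ against $s$), $t_0\ge d(\frac1p-\frac1{p_0})$ with strict inequality when $p_0=p$, and the exponent range \eqref{p_0 uslov} which is exactly the set of $p_0$ for which $0\le\masfR(p_0,1,p')\le 1/2$ holds and $q_0\le q$ giving $\masfR(\mathrm q)\le 1$. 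This yields
$$
\| \sigma \|_{M^{p,q}_{s,0}} = \| a\ast W(\vec\phi,\vec\varphi) \|_{M^{p,q}_{s,0}} \lesssim \| a \|_{M^{p_0,q_0}_{s_0,t_0}} \| \vec\phi \|_{\mathcal M^{r_2}_{2s,0}} \| \vec\varphi \|_{\mathcal M^{r_1}_{2s,0}}.
$$

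Finally, for step (ii), since $\sigma\in M^{p,q}_{s,0}(\mathbb R^{2nd})$ and the hypotheses \eqref{uslov11}, \eqref{uslov22} are in force, Theorem \ref{bounded Weyl-extended} gives that $L_\sigma=\aaf$ extends to a bounded operator from $\mathcal M^{p_1,q_1}_{s,0}(\mathbb R^{nd})$ to $\mathcal M^{p_2,q_2}_{s,0}(\mathbb R^{nd})$ with $\| L_\sigma\|_{op}\lesssim\|\sigma\|_{M^{p,q}_{s,0}}$; combining with the bound on $\|\sigma\|$ from step (i) gives the claimed operator-norm estimate. I expect the main obstacle to be the careful bookkeeping in step (i): matching the weight exponents (the factor of $2$ between $s$ and $2s$, the splitting of $s_0$, and the role of $t_0$ and $p_0$ as encoding the admissible range $0\le\masfR\le1/2$ in Theorem \ref{mainconvolution}), and making sure the exponent pair chosen for the Wigner estimate is jointly compatible with the subsequent convolution estimate. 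The rest is a clean concatenation of quoted results. Throughout one works on the tensor-product side and uses the isomorphism $\mathcal M^{p,q}_{s,t}(\mathbb R^{nd})\cong M^{p,q}_{s,t}(\mathbb R^{nd})$ from the kernel theorem without further comment, exactly as the paper allows.
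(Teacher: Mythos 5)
Your proposal is correct and follows essentially the same route as the paper: reduce via Theorem \ref{Weyl connection lemma} to $L_{a\ast W(\vec\phi,\vec\varphi)}$, place $W(\vec\phi,\vec\varphi)$ in $\mathcal M^{1,\infty}_{2s,0}$ by Corollary \ref{cor-cross-Wigner}, convolve via Theorem \ref{mainconvolution} with $\masfR(p',p_0,1)$ and $\masfR(q',q_0,\infty)$ to land $\sigma$ in $M^{p,q}_{s,0}$, and conclude by the mapping theorem for Weyl operators. (You even correctly cite Theorem \ref{bounded Weyl-extended} for the last step, where the paper's text nominally points to Theorem \ref{bounded Weyl}.)
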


\begin{proof}
We first estimate $W (\vec{\phi} ,\vec{\varphi} )$. If  $\vec{\varphi} \in \mathcal{M}^{r_1} _{2s,0} (\mathbb{R}^{nd}), $ $\vec{\phi} \in \mathcal{M}^{r_2} _{2s,0} (\mathbb{R}^{nd}) $, with
 $  \frac{1}{r_1} + \frac{1}{r_2 } \geq 1$, then
Corollary  \ref{cor-cross-Wigner} implies that
$$
W (\vec{\phi} ,\vec{\varphi} )  \in \mathcal{M}^{1, \infty} _{2s,0} (\mathbb{R}^{2nd}).
$$
\par

Now, we use the calculation of  $a \ast W (\vec{\phi} ,\vec{\varphi} )$ from the proof of Theorem \ref{Weyl connection lemma}
(see \eqref{convolution!}) and Theorem \ref{mainconvolution}. The Young functional \eqref{R-functional} becomes
$ \masfR (\mathrm{p}) = \masfR (p', p_0, 1)$, and the condition $ \masfR (\mathrm{p})  \in [0, 1/2]$ is equivalent to
\eqref{p_0 uslov}, while
$ \masfR (\mathrm{q}) = \masfR (q', q_0, \infty) \leq 1 $ is equivalent to $q_0 \leq q$.
Furthermore, \eqref{lastineq2C} transfers to  $ s_0 \geq - s,$
while \eqref{lastineq2A} and \eqref{lastineq2B}  are equivalent to
$ \displaystyle t_0 \geq d \left ( \frac{1}{p} - \frac{1}{p_0 }  \right ) $ with
the strict inequality  when $ p_0 =p $. Therefore, by Theorem  \ref{mainconvolution} 2) we obtain
$$
a \ast  W(\vec{\phi} ,\vec{\varphi}) \in
M^{p_0, q_0} _{s_0,t_0} (\mathbb{R}^{2nd}) \ast  \mathcal{M}^{1, \infty} _{2s,0} (\mathbb{R}^{2nd}) \subset
M^{ p, q} _{s,0} (\mathbb{R}^{2nd}).
$$

\par

Finally, by Theorem \ref{bounded Weyl}  with $\sigma = a \ast   W(\vec{\phi} ,\vec{\varphi}) $, it follows that
$$
\| \aaf \|_{op} =  \| L_\sigma \|_{op} \leq \| \sigma \|_{ M^{ p, q} _{s,0}}
\leq
\|a\|_{M^{p_0, q_0} _{s_0,t_0}} \| \vec{\varphi} \|_{\mathcal{M}^{r_1} _{2s,0} }
\| \vec{\phi} \|_ {\mathcal{M}^{r_2} _{2s,0} },
$$
and the Theorem is proved.
\end{proof}

\par

In particular, we recover (the linear case treated in) \cite[Theorem 5.2]{CN} when $ r_1 = r_2 = r$, $t_0 = 0,$ $ s_0 = -s$, $p_0 = p $ (that is $ \masfR (p',p_0,1) = 0$), and $q_0 = q $ (that is $ \masfR (q', q_0, \infty) = 1$). Therefore, by
\cite[Remark 5.3]{CN},  we obtain an extension of \cite[Theorem 3.2]{CorGro2003} and
\cite[Theorem 4.11]{To8} for this particular choice of weights.

\par

Note that conditions $ \masfR (p',p_0,1) \in (0, 1/2]$ which extends the possible choices of the Lebesgue parameters beyond the usual Young condition  $ \masfR (p',p_0,1) = 0$ must be compensated by an additional condition to the weights, expressed by
$ \displaystyle t_0 \geq d \left ( \frac{1}{p} - \frac{1}{p_0 }  \right ) $.

Another result concerning the boundedness of (bilinear) localization operators on un-weighted modulation spaces is given by
\cite[Theorem 15]{Teof2018}. There we used different type of estimates, leading to the result which partially overlap
with Theorem \ref{conv-a-cross-Wigner}. For example, both results give the same continuity property when the symbol $a$ belongs to
$ a \in M^{\infty, 1}  ( \mathbb{R}^{2nd})$.

%
%
%
%
%
%

\end{document}